\theoremstyle{plain}
\newtheorem{thm}{Theorem}[section]
\newtheorem{lem}[thm]{Lemma}
\newtheorem{prop}[thm]{Proposition}
\newtheorem{conj}[thm]{Conjecture}
\theoremstyle{definition}
\newtheorem{defn}[thm]{Definition}
\newtheorem{exam}[thm]{Example}
\newtheorem{rem}[thm]{Remark}
\theoremstyle{remark}
\numberwithin{equation}{section}
\begin{document}

\title{The integer $\{2\}$-domination number of grids}

\author[*]{Jia-Ying Lee} 
\author[*]{Chia-An Liu}
\affil[*]{Department of Mathematics, Soochow University, Taipei 111002, Taiwan}
\date{January 31, 2025}

\maketitle

\begin{abstract}
For positive integers $m$ and $n$, the grid graph $G_{m,n}$ is the Cartesian product of the path graph $P_m$ on $m$ vertices and the path graph $P_n$ on $n$ vertices. An integer $\{2\}$-dominating function of a graph is a mapping from the vertex set to $\{0,1,2\}$ such that the sum of the mapped values of each vertex and its neighbors is at least $2$; the integer $\{2\}$-domination number of a graph is defined to be the minimum sum of mapped values of all vertices among all integer $\{2\}$-dominating functions. In this paper, we compute the integer $\{2\}$-domination numbers of $G_{1,n}$ and $G_{2,n}$, attain an upper bound to the integer $\{2\}$-domination numbers of $G_{3,n}$, and propose an algorithm to count the integer $\{2\}$-domination numbers of $G_{m,n}$ for arbitrary $m$ and $n$. As a future work, we list the integer $\{2\}$-domination numbers of $G_{4,n}$ for small $n$, and conjecture on its formula.
\end{abstract}

{\bf keywords}: grid graph, integer $\{2\}$-dominating function, integer $\{2\}$-domination number.

{\bf MSC2010}:  05C69, 05C85

\section{Introduction}

A {\it simple graph} $G=(V,E)$ is composed of a vertex set $V$ and an edge set $E$, where $V$ is a finite set of elements called {\it vertices}, and $E$ is a set of unordered pairs of distinct vertices called {\it edges}. In this study, a graph is always a simple graph, and for an edge $\{u,v\}$ we write $uv$ in brief.

For a positive integer $n$, let $P_n$ be the {\it path graph} with vertex set $\{v_1,v_2,\ldots,v_n\}$, in which two vertices $v_i,v_j$ are adjacent if and only if $|i-j|=1.$ The {\it Cartesian product} of two graphs $G=(V_1,E_1)$ and $H=(V_2,E_2)$, denoted by $G\square H$, is the graph with vertex set
$$\{(u,v)\mid u\in V_1, v\in V_2\},$$
in which two vertices $(u,v)$ and $(u',v')$ are adjacent if and only if $u=u'$ and $vv'\in E_2$ or $uu'\in E_1$ and $v=v'.$ For positive integers $m$ and $n$, the {\it grid graph} $G_{m,n}=P_m\square P_n$ is the Cartesian product of path graphs $P_m$ and $P_n$. For example,  the grid graph $G_{3,7}=P_3\square P_7$ is shown in Figure~\ref{fig_grid37}.

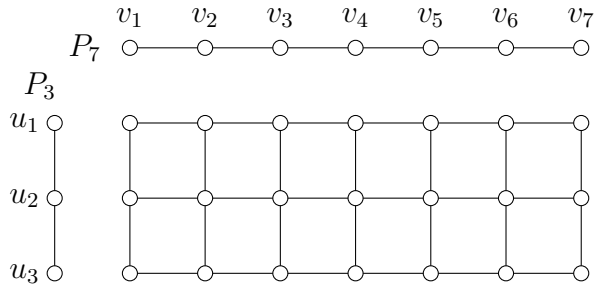
\begin{figure}[h]
\begin{center}
\begin{tikzpicture}

\draw (0,0)--(0,2);
\draw (1,3)--(7,3);
\draw (1,0)--(7,0);
\draw (1,1)--(7,1);
\draw (1,2)--(7,2);
\draw (1,0)--(1,2);
\draw (2,0)--(2,2);
\draw (3,0)--(3,2);
\draw (4,0)--(4,2);
\draw (5,0)--(5,2);
\draw (6,0)--(6,2);
\draw (7,0)--(7,2);

\draw (0.4,3) node {$P_7$};
\draw (-0.2,2.5) node{$P_3$};
\draw (-0.4,0) node{$u_3$};
\draw (-0.4,1) node{$u_2$};
\draw (-0.4,2) node{$u_1$};
\draw (1,3.4) node{$v_1$};
\draw (2,3.4) node{$v_2$};
\draw (3,3.4) node{$v_3$};
\draw (4,3.4) node{$v_4$};
\draw (5,3.4) node{$v_5$};
\draw (6,3.4) node{$v_6$};
\draw (7,3.4) node{$v_7$};

\draw[fill=white] (0,0)coordinate(A00) circle(0.1cm);
\draw[fill=white] (0,1)coordinate(A01) circle(0.1cm);
\draw[fill=white] (0,2)coordinate(A02) circle(0.1cm);
\draw[fill=white] (1,0)coordinate(A10) circle(0.1cm);
\draw[fill=white] (1,1)coordinate(A11) circle(0.1cm);
\draw[fill=white] (1,2)coordinate(A12) circle(0.1cm);
\draw[fill=white] (1,3)coordinate(A13) circle(0.1cm);
\draw[fill=white] (2,0)coordinate(A20) circle(0.1cm);
\draw[fill=white] (2,1)coordinate(A21) circle(0.1cm);
\draw[fill=white] (2,2)coordinate(A22) circle(0.1cm);
\draw[fill=white] (2,3)coordinate(A23) circle(0.1cm);
\draw[fill=white] (3,0)coordinate(A30) circle(0.1cm);
\draw[fill=white] (3,1)coordinate(A31) circle(0.1cm);
\draw[fill=white] (3,2)coordinate(A32) circle(0.1cm);
\draw[fill=white] (3,3)coordinate(A33) circle(0.1cm);
\draw[fill=white] (4,0)coordinate(A40) circle(0.1cm);
\draw[fill=white] (4,1)coordinate(A41) circle(0.1cm);
\draw[fill=white] (4,2)coordinate(A42) circle(0.1cm);
\draw[fill=white] (4,3)coordinate(A43) circle(0.1cm);
\draw[fill=white] (5,0)coordinate(A50) circle(0.1cm);
\draw[fill=white] (5,1)coordinate(A51) circle(0.1cm);
\draw[fill=white] (5,2)coordinate(A52) circle(0.1cm);
\draw[fill=white] (5,3)coordinate(A53) circle(0.1cm);
\draw[fill=white] (6,0)coordinate(A60) circle(0.1cm);
\draw[fill=white] (6,1)coordinate(A61) circle(0.1cm);
\draw[fill=white] (6,2)coordinate(A62) circle(0.1cm);
\draw[fill=white] (6,3)coordinate(A63) circle(0.1cm);
\draw[fill=white] (7,0)coordinate(A70) circle(0.1cm);
\draw[fill=white] (7,1)coordinate(A71) circle(0.1cm);
\draw[fill=white] (7,2)coordinate(A72) circle(0.1cm);
\draw[fill=white] (7,3)coordinate(A73) circle(0.1cm);

\end{tikzpicture}

\caption{The grid graph $G_{3,7}=P_3\square P_7$.}
\label{fig_grid37}

\end{center}
\end{figure}

A set $S$ of vertices of a graph $G=(V,E)$ is called a {\it dominating set}, provided that each vertex in $V\setminus S$ is adjacent to at least one vertex in $S$. The {\it domination number} of $G$ is the minimum size of a dominating set of $G$, and is denoted by $\gamma(G).$ Many authors realize the domination problem by placing one stone in each vertex of $S$ such that all vertices have or are adjacent to at least one stone. In this sense, $\gamma(G)$ denotes the minimum number of stones in a graph if all vertices are dominated. The exact values for $G_{m,n}$ for $m=1,2,3,4$ were found in~\cite{jk83}. In 1992, Chang~\cite{c92} established the domination numbers of $G_{5,n}$ and $G_{6,n}$ in his Ph.D. dissertation. Moreover, he proposed a dynamic program for computing $\gamma(G_{m,n})$ based on an algorithm of Hare given in~\cite{hhh86}. In 2011, Gon\c{c}alves {\it et al.}~\cite{gprt11} concluded the domination number of all grids. More precisely, for small $m,n$ they listed the values of $\gamma(G_{m,n})$, and for $16\leq m\leq n$ they proved
$$\gamma(G_{m,n}) = \left\lfloor\frac{(m+2)(n+2)}{5}\right\rfloor-4,$$
which was also conjectured by Chang in~\cite[Chapter~6]{c92}.

The domination-type problems of the grid graphs are studied by many authors. A {\it total dominating set} $S$ of a graph $G=(V,E)$ is a subset of $V$ such that every vertex in $V$ (instead of $V\setminus S$) is adjacent to at least one vertex in $S$. Also, the {\it total domination number} $\gamma_{t}(G)$ of $G$ is the minimum size of a total dominating set of $G$. In 2002, Gravier~\cite{g02} investigated the value of $\gamma_{t}(G_{m,n})$ and attained some upper and lower bounds of $\gamma_{t}(G_{m,n})$. A {\it 2-dominating set} of a graph $G=(V,E)$ is a subset of $V$ such that every vertex in $V\setminus S$ is adjacent to at least two vertices in $S$, and the {\it 2-domination number} $\gamma_2(G)$ is the minimum size of a 2-dominating set of $G$. In 2017, Shaheen {\it et al.}~\cite{sma17} calculated the value of $\gamma_2(G_{m,n})$ for $m\leq 5$ and arbitrary $n$. However, in 2019 Rao {\it et al.}~\cite{rt19} corrected the exact value of $\gamma_2(G_{m,n})$ computed in~\cite{sma17} and attained $\gamma_2(G_{m,n})$ for all $m,n$ using some methods similar to those in~\cite{gprt11}. A {\it Roman domination set} of a graph $G=(V,E)$ is a pair $(S_1,S_2)$ of disjoint subsets of $V$ such that every vertex in $V\setminus \{S_1\cup S_2\}$ is adjacent to at least one vertex in $S_2$, while the {\it Roman domination number} $\gamma_R(G)$ of $G$ is the minimum number of $|S_1|+2|S_2|$ among all Roman domination sets $(S_1,S_2)$ of $G$. In the same paper~\cite{rt19}, Rao {\it et al.} further proposed a method for obtaining $\gamma_R(G_{m,n}).$

One may notice that in the previous Roman dominating problem, we can put ``two'' stones on a vertex if we see that the sets $S_1$ and $S_2$ collect the vertices having $1$ and $2$ stones, respectively. For any vertex $v$ in a graph, the {\it neighborhood} $N(v)$ of $v$ is the set of vertices adjacent to $v$, and the {\it closed neighborhood} $N[v]$ of $v$ is the set $N(v)\cup\{v\}.$ Let $k$ be a positive integer, and $\mathbb{Z}^{\geq 0}$ be the set of nonnegative integers. An {\it integer $\{k\}$-dominating function} of a graph $G=(V,E)$ is a function $f:V\rightarrow \mathbb{Z}^{\geq 0}$ satisfying
$$\sum_{y\in N[x]}f(y)\geq k$$
for each $x\in V.$ The {\it integer $\{k\}$-domination number} $\gamma_{\{k\}}(G)$ of a graph $G=(V,E)$ is the minimum value of $\sum_{x\in V}f(x)$ among all integer $\{k\}$-dominating functions $f$ of $G$. The problem of computing the integer $\{k\}$-domination number of graphs was started by Domke {\it et al.} in~\cite{dhlf91}. So far, results for the integer $\{k\}$-domination numbers are all on special classes of graphs. Please refer to~\cite{bhk06} on the Cartesian product of graphs, and~\cite{cfl20} on the circulant graphs. When $k=1$, the number $\gamma_{\{1\}}(G)$ is clearly the domination number $\gamma(G)$ of $G.$ We will study the method for computing $\gamma_{\{2\}}(G_{m,n})$ for all grid graphs $G_{m,n}$.

Throughout this paper, a grid graph $G_{m,n}$ has the vertex set
$$V_{m,n}=\{v_{i,j}\mid 1\leq i\leq m,1\leq j\leq n\},$$
where $v_{i_1,j_1}v_{i_2,j_2}$ is an edge if and only if $|i_1-i_2|=1$ and $j_1=j_2$ or $i_1=i_2$ and $|j_1-j_2|=1.$ For convenience, let $f(S)=\sum_{v\in S}f(v)$ for any set $S$ of vertices. We say a vertex $v$ is {\it integer $\{2\}$-dominated} if $f(N[v])\geq 2.$ Also, an integer $\{2\}$-dominating function $f$ of $G_{m,n}$ is {\it minimal} if $f(V_{m,n})=\gamma_{\{2\}}(G_{m,n}).$ For a function $g: V_{m,n}\rightarrow \{0,1,2\}$, not necessarily an integer $\{2\}$-dominating function, we denote each vertex $v\in V_{m,n}$ by a circle filled with white, slashes, and black if $f(v)=0,$ $1$, and $2,$ respectively. For example, suppose that the grid $G_{3,2}$ has vertex set $V_{3,2}=\{v_{i,j}\mid\ 1\leq i\leq 3,1\leq j\leq2\}$ and a function $f:V_{3,2}\rightarrow \{0,1,2\}$ is defined by $$f(v_{i,j})=\begin{cases}
    2, \quad\text{if}~v_{i,j}=v_{3,1};\\
    1, \quad\text{if}~v_{i,j}=v_{1,2};\\
    0, \quad\text{otherwise.}
\end{cases}$$
Then, the mapped values and corresponding notations on its vertices are shown in Figure \ref{fig_notation}.

\begin{figure}[h]
\begin{center}
\begin{tikzpicture}
\draw (0,0)--(0,2);
\draw (1,0)--(1,2);
\draw (0,0)--(1,0);
\draw (0,1)--(1,1);
\draw (0,2)--(1,2);
\draw[fill] (0,0)coordinate(A00) circle(0.1cm);
\draw (-0.4,0) node {$v_{31}$};
\draw[fill=white] (1,0)coordinate(A10) circle(0.1cm);
\draw (1.4,0) node {$v_{32}$};
\draw[fill=white] (0,1)coordinate(A01) circle(0.1cm);
\draw (-0.4,1) node {$v_{21}$};
\draw[fill=white] (1,1)coordinate(A11) circle(0.1cm);
\draw (1.4,1) node {$v_{22}$};
\draw[fill=white] (0,2)coordinate(A02) circle(0.1cm);
\draw (-0.4,2) node {$v_{11}$};
\draw[fill=white] (1,2) circle(0.1cm);
\draw (1.4,2) node {$v_{12}$};
\draw[pattern=north east lines] (1,2)coordinate(A12) circle(0.1cm);
\end{tikzpicture}

\caption{A mapping $f$ and its corresponding notations on vertices of a $G_{3,2}.$}
\label{fig_notation}

\end{center}
\end{figure}
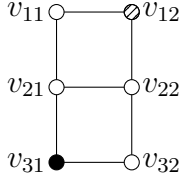

This paper is organized as follows. In Section~2, the exact values of $\gamma_{\{2\}}(G_{1,n})$ and $\gamma_{\{2\}}(G_{2,n})$ are confirmed. Additionally, an upper bound of $\gamma_{\{2\}}(G_{3,n})$ is given. In Section~3, we propose an algorithm to obtain $\gamma_{\{2\}}(G_{m,n})$ for arbitrary $m$ and $n$. The concluding remark is given in Section~4 and, as a demonstration of the algorithm, the exact values of $\gamma_{\{2\}}(G_{4,n})$ are conjectured.

\section{The values of $\gamma_{\{2\}}(G_{m,n})$ for $m\in\{1,2,3\}$}

The values of $\gamma_{\{2\}}(G_{1,n})$, $\gamma_{\{2\}}(G_{2,n})$, and $\gamma_{\{2\}}(G_{3,n})$ will be investigated in this section. More precisely, we first attain an upper bound of $\gamma_{\{2\}}(G_{m,n})$ by providing some dominating function for $m\in\{1,2,3\}$, and then verify that it is also a lower bound  for $m\in\{1,2\}$. Thus, the exact values of $\gamma_{\{2\}}(G_{1,n})$ and $\gamma_{\{2\}}(G_{2,n})$ are confirmed, while $\gamma_{\{2\}}(G_{3,n})$ has only an upper bound.

First of all, $\gamma_{\{2\}}(G_{1,n})$ is given in the following.
\begin{thm}     \label{thm_Gridmn_1}
For positive integer $n$, we have
$$\gamma_{\{2\}}(G_{1,n}) = 2  \left\lceil\frac{n}{3}\right\rceil.$$
\end{thm}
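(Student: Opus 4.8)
The plan is to observe that $G_{1,n}=P_1\square P_n$ is simply the path $P_n$ on the vertices $v_{1,1},v_{1,2},\dots,v_{1,n}$, so the task reduces to computing the integer $\{2\}$-domination number of a path. I will prove the stated formula by establishing a matching upper and lower bound, writing $n=3q+r$ with $r\in\{0,1,2\}$ when it is convenient to separate cases.

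For the upper bound I would exhibit an explicit integer $\{2\}$-dominating function of weight $2\lceil n/3\rceil$. Partition the vertices into $\lceil n/3\rceil$ consecutive blocks of (at most) three vertices, namely $\{v_{1,3i-2},v_{1,3i-1},v_{1,3i}\}$, and assign the value $2$ to the middle vertex $v_{1,3i-1}$ of each full block. Since every vertex of a size-three block lies within distance $1$ of that middle vertex, each such block is entirely integer $\{2\}$-dominated, and the vertices straddling two blocks are covered as well. The only care needed is in the last, possibly incomplete, block, where I place the value $2$ on a single vertex chosen so that all of that block's vertices are dominated. The resulting function uses the value $2$ exactly $\lceil n/3\rceil$ times, so $\gamma_{\{2\}}(G_{1,n})\le 2\lceil n/3\rceil$.

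For the lower bound I would use a closed-neighborhood packing argument rather than summing all the constraints. Consider the $\lceil n/3\rceil$ vertices $v_{1,j}$ with $j\equiv 1\pmod 3$, that is $j=1,4,7,\dots$; any two of them lie at distance at least $3$, so their closed neighborhoods $N[v_{1,j}]$ are pairwise disjoint subsets of $V_{1,n}$. For any integer $\{2\}$-dominating function $f$ and each chosen $j$ we have $f(N[v_{1,j}])\ge 2$, and disjointness gives
$$
f(V_{1,n})\ge \sum_{\substack{1\le j\le n\\ j\equiv 1\ (\mathrm{mod}\ 3)}} f\big(N[v_{1,j}]\big)\ge 2\left\lceil\frac{n}{3}\right\rceil .
$$
Minimizing over all such $f$ yields $\gamma_{\{2\}}(G_{1,n})\ge 2\lceil n/3\rceil$, which together with the construction proves the theorem.

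The step I expect to be the main obstacle is the lower bound, specifically the case $n\equiv 1\pmod 3$. A naive argument that sums $f(N[x])\ge 2$ over all vertices $x$ only produces the weaker estimate $\lceil 2n/3\rceil$, which is short by one precisely when $n\equiv 1\pmod 3$; the missing unit is invisible to global averaging because the two endpoints have closed neighborhoods of size $2$ rather than $3$. The packing argument circumvents this, but its validity hinges on verifying that exactly $\lceil n/3\rceil$ vertices with pairwise disjoint closed neighborhoods can always be selected, including when a chosen vertex is an endpoint (whose closed neighborhood still satisfies the required inequality). Once this count is confirmed in each residue class of $n$ modulo $3$, the two bounds coincide and the formula follows.
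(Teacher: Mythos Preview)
Your proposal is correct and follows essentially the same approach as the paper: a closed-neighborhood packing argument for the lower bound and an explicit placement of $2$'s for the upper bound. The only difference is cosmetic---the paper splits into three cases according to $n\bmod 3$ (using centers $v_{3i-2}$ in two cases and $v_{3i-1}$ in the third), whereas your uniform choice of centers $j\equiv 1\pmod 3$ handles all residues at once; both yield exactly $\lceil n/3\rceil$ pairwise disjoint closed neighborhoods.
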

\begin{proof} Let $v_{1},v_{2},\dots,v_{n}$ be the vertices of a $G_{1,n}$, where $v_{i}v_{i+1}$ is an edge for $i=1,2,\dots,n-1$. Let $f$ be a minimal integer $\{2\}$-dominating function of $G_{1,n}$.
We show the result according to the remainder of $n$ modulo $3$.

\smallskip

\noindent{\bf Case 1.}
Assume $n=3k-2$ for some positive integer $k.$ If $k=1$ then the result is trivial. Suppose $k\geq 2.$ In order to dominate $v_1,$ $v_n$, and $v_{3i-2}$ for each $2\leq i\leq k-1$, we have
$$f(v_1)+f(v_2)\geq 2, \quad f(v_{n-1})+f(v_n)\geq 2,$$
and
$$f(v_{3i-3})+f(v_{3i-2})+f(v_{3i-1})\geq 2\quad \text{for each}~2\leq i\leq k-1,$$
respectively. Please see Figure \ref{fig_labeling_thmG1n_at.least2} for the partition of the vertices of $G_{1,n}.$
Hence,
$$\gamma_{\{2\}}(G_{1,n})=\sum_{i=1}^n f(v_i) \geq 2k=2\left\lceil\frac{n}{3}\right\rceil.$$

\begin{figure}[h]
\begin{center}
\begin{tikzpicture}
\draw (0,0)--(1,0);
\draw[dashed] (-0.25,-0.25) rectangle (1.25,0.4);
\draw (1,0)--(1.2,0);
\draw (1.8,0)--(2,0);
\draw (1.5,0) node{$\cdots$};
\draw (2,0)--(3,0);
\draw (3,0)--(4.2,0);
\draw[dashed] (1.75,-0.25) rectangle (4.25,0.4);
\draw (4.5,0) node{$\cdots$};
\draw (4.8,0)--(5,0);
\draw (5,0)--(6,0);
\draw[dashed] (4.75,-0.25) rectangle (6.25,0.4);
\draw[fill=white] (0,0)coordinate(A00) circle(0.1cm);
\draw (0,-0.4) node {$v_{1}$};
\draw[fill=white] (1,0)coordinate(A10) circle(0.1cm);
\draw (1,-0.4) node {$v_{2}$};
\draw[fill=white] (2,0)coordinate(A20) circle(0.1cm);
\draw (2,-0.4) node {$v_{3i-3}$};
\draw[fill=white] (3,0)coordinate(A30) circle(0.1cm);
\draw (3,-0.4) node {$v_{3i-2}$};
\draw[fill=white] (4,0)coordinate(A40) circle(0.1cm);
\draw (4,-0.4) node {$v_{3i-1}$};
\draw[fill=white] (5,0)coordinate(A50) circle(0.1cm);
\draw (5,-0.4) node {$v_{n-1}$};
\draw[fill=white] (6,0)coordinate(A60) circle(0.1cm);
\draw (6,-0.4) node {$v_{n}$};
\end{tikzpicture}

\caption{A partition of the vertices of $G_{1,n}$ for $n=3k-2.$} 
\label{fig_labeling_thmG1n_at.least2}

\end{center}
\end{figure}
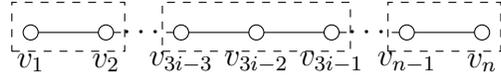

On the other hand, let $g$ be an integer $\{2\}$-dominating function of $G_{1,n}$ such that
$$g(v_{j})=\begin{cases}
2, \quad \text{if}~j\in \{1,n\}\cup\{3i-2\mid 2\leq i\leq k-1\};\\
0, \quad \text{otherwise}.
\end{cases}$$
Please refer to Figure \ref{fig_labeling_thmG1n} for the mapped values on $G_{1,n}$ by $g$, in which a vertex $v$ is filled with white and black if $g(v)=0$ and $2,$ respectively. Thus,
$$\gamma_{\{2\}}(G_{1,n})\leq \sum_{i=1}^n g(v_i) = 2k=2\left\lceil\frac{n}{3}\right\rceil.$$

\begin{figure}[h]
\begin{center}
\begin{tikzpicture}
\draw (0,0)--(1,0);
\draw (1,0)--(1.22,0);
\draw (1.78,0)--(2,0);
\draw (1.5,0) node{$\cdots$};
\draw (2,0)--(3,0);
\draw (3,0)--(4,0);
\draw (4.5,0) node{$\cdots$};
\draw (4,0)--(4.22,0);
\draw (4.78,0)--(5,0);
\draw (5,0)--(6,0);
\draw[fill] (0,0)coordinate(A00) circle(0.1cm);
\draw (0,-0.4) node {$v_{1}$};
\draw[fill=white] (1,0)coordinate(A10) circle(0.1cm);
\draw (1,-0.4) node {$v_{2}$};
\draw[fill=white] (2,0)coordinate(A20) circle(0.1cm);
\draw (2,-0.4) node {$v_{3i-3}$};
\draw[fill] (3,0)coordinate(A30) circle(0.1cm);
\draw (3,-0.4) node {$v_{3i-2}$};
\draw[fill=white] (4,0)coordinate(A40) circle(0.1cm);
\draw (4,-0.4) node {$v_{3i-1}$};
\draw[fill=white] (5,0)coordinate(A50) circle(0.1cm);
\draw (5,-0.4) node {$v_{n-1}$};
\draw[fill] (6,0)coordinate(A60) circle(0.1cm);
\draw (6,-0.4) node {$v_{n}$};
\end{tikzpicture}

\caption{An integer $\{2\}$-dominating function of $G_{1,n}$ for $n=3k-2.$} 
\label{fig_labeling_thmG1n}

\end{center}
\end{figure}
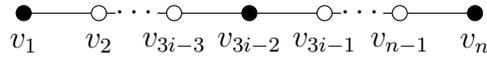

\noindent{\bf Case 2.}
Assume $n=3k-1$ for some positive integer $k.$ In order to dominate $v_1,$ and $v_{3i-2}$ for each $2\leq i\leq k$, we have $$f(v_1)+f(v_2)\geq 2,$$
and
$$f(v_{3i-3})+f(v_{3i-2})+f(v_{3i-1})\geq 2\quad \text{for each}~2\leq i\leq k,$$
respectively. Please see Figure \ref{fig_labeling_thmG1n_case2-1} for the partition of the vertices of $G_{1,n}.$ Hence,
$$\gamma_{\{2\}}(G_{1,n})=\sum_{i=1}^n f(v_i) \geq 2k=2\left\lceil\frac{n}{3}\right\rceil.$$

\begin{figure}[h]
\begin{center}
\begin{tikzpicture}
\draw (0,0)--(1,0);
\draw[dashed] (-0.25,-0.25) rectangle (1.25,0.4);
\draw (1,0)--(1.22,0);
\draw (1.78,0)--(2,0);
\draw (1.5,0) node{$\cdots$};
\draw (2,0)--(3,0);
\draw (3,0)--(4,0);
\draw[dashed] (1.75,-0.25) rectangle (4.25,0.4);
\draw (4.5,0) node{$\cdots$};
\draw (4,0)--(4.22,0);
\draw (4.78,0)--(5,0);
\draw[dashed] (4.725,-0.25) rectangle (7.25,0.4);
\draw (5,0)--(6,0);
\draw (6,0)--(7,0);
\draw[fill=white] (0,0)coordinate(A00) circle(0.1cm);
\draw (0,-0.4) node {$v_{1}$};
\draw[fill=white] (1,0)coordinate(A10) circle(0.1cm);
\draw (1,-0.4) node {$v_{2}$};
\draw[fill=white] (2,0)coordinate(A20) circle(0.1cm);
\draw (2,-0.4) node {$v_{3i-3}$};
\draw[fill=white] (3,0)coordinate(A30) circle(0.1cm);
\draw (3,-0.4) node {$v_{3i-2}$};
\draw[fill=white] (4,0)coordinate(A40) circle(0.1cm);
\draw (4,-0.4) node {$v_{3i-1}$};
\draw[fill=white] (5,0)coordinate(A50) circle(0.1cm);
\draw (5,-0.4) node {$v_{n-2}$};
\draw[fill=white] (6,0)coordinate(A60) circle(0.1cm);
\draw (6,-0.4) node {$v_{n-1}$};
\draw[fill=white] (7,0)coordinate(A70) circle(0.1cm);
\draw (7,-0.4) node {$v_{n}$};
\end{tikzpicture}

\caption{A partition of the vertices of $G_{1,n}$ for $n=3k-1.$} 
\label{fig_labeling_thmG1n_case2-1}

\end{center}
\end{figure}
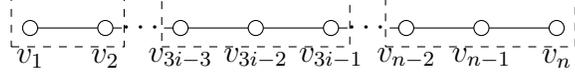

On the other hand, let $g$ be an integer $\{2\}$-dominating function of $G_{1,n}$ such that
$$g(v_{j})=\begin{cases}
2, \quad \text{if}~j\in \{1\}\cup\{3i-2\mid 2\leq i\leq k\};\\
0, \quad \text{otherwise}.
\end{cases}$$
Please refer to Figure \ref{fig_labeling_thmG1n_case2-2} for the mapped values on $G_{1,n}$ by $g$, in which a vertex $v$ is filled with white and black if $g(v)=0$ and $2,$ respectively. Thus,
$$\gamma_{\{2\}}(G_{1,n})\leq \sum_{i=1}^n g(v_i) = 2k=2\left\lceil\frac{n}{3}\right\rceil.$$

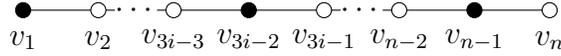
\begin{figure}[h]
\begin{center}
\begin{tikzpicture}
\draw (0,0)--(1,0);
\draw (1,0)--(1.22,0);
\draw (1.78,0)--(2,0);
\draw (1.5,0) node{$\cdots$};
\draw (2,0)--(3,0);
\draw (3,0)--(4,0);
\draw (4.5,0) node{$\cdots$};
\draw (4,0)--(4.22,0);
\draw (4.78,0)--(5,0);
\draw (5,0)--(6,0);
\draw (6,0)--(7,0);
\draw[fill=black] (0,0)coordinate(A00) circle(0.1cm);
\draw (0,-0.4) node {$v_{1}$};
\draw[fill=white] (1,0)coordinate(A10) circle(0.1cm);
\draw (1,-0.4) node {$v_{2}$};
\draw[fill=white] (2,0)coordinate(A20) circle(0.1cm);
\draw (2,-0.4) node {$v_{3i-3}$};
\draw[fill=black] (3,0)coordinate(A30) circle(0.1cm);
\draw (3,-0.4) node {$v_{3i-2}$};
\draw[fill=white] (4,0)coordinate(A40) circle(0.1cm);
\draw (4,-0.4) node {$v_{3i-1}$};
\draw[fill=white] (5,0)coordinate(A50) circle(0.1cm);
\draw (5,-0.4) node {$v_{n-2}$};
\draw[fill=black] (6,0)coordinate(A60) circle(0.1cm);
\draw (6,-0.4) node {$v_{n-1}$};
\draw[fill=white] (7,0)coordinate(A70) circle(0.1cm);
\draw (7,-0.4) node {$v_{n}$};
\end{tikzpicture}

\caption{An integer $\{2\}$-dominating function of $G_{1,n}$ for $n=3k-1.$} 
\label{fig_labeling_thmG1n_case2-2}

\end{center}
\end{figure}

\smallskip

\noindent{\bf Case 3.}
Assume $n=3k$ for some positive integer $k.$ For each $1\leq i\leq k,$ in order to dominate $v_{3i-1}$, we have
$$f(v_{3i-2})+f(v_{3i-1})+f(v_{3i})\geq 2.$$
Please see Figure \ref{fig_labeling_thmG1n_case3-1} for the partition of the vertices of $G_{1,n}.$
Hence,
$$\gamma_{\{2\}}(G_{1,n})=\sum_{i=1}^n f(v_i) \geq 2k=2\left\lceil\frac{n}{3}\right\rceil.$$

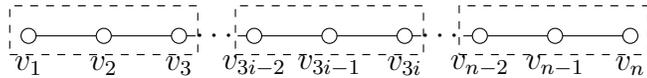
\begin{figure}[h]
\begin{center}
\begin{tikzpicture}
\draw (0,0)--(2,0);
\draw[dashed] (-0.25,-0.25) rectangle (2.25,0.4);
\draw (2,0)--(2.22,0);
\draw (2.78,0)--(3,0);
\draw (2.5,0) node{$\cdots$};
\draw (3,0)--(5,0);
\draw[dashed] (2.75,-0.25) rectangle (5.25,0.4);
\draw (5.5,0) node{$\cdots$};
\draw (5,0)--(5.22,0);
\draw (5.78,0)--(8,0);
\draw[dashed] (5.725,-0.25) rectangle (8.25,0.4);
\draw[fill=white] (0,0)coordinate(A00) circle(0.1cm);
\draw (0,-0.4) node {$v_{1}$};
\draw[fill=white] (1,0)coordinate(A10) circle(0.1cm);
\draw (1,-0.4) node {$v_{2}$};
\draw[fill=white] (2,0)coordinate(A20) circle(0.1cm);
\draw (2,-0.4) node {$v_{3}$};
\draw[fill=white] (3,0)coordinate(A30) circle(0.1cm);
\draw (3,-0.4) node {$v_{3i-2}$};
\draw[fill=white] (4,0)coordinate(A40) circle(0.1cm);
\draw (4,-0.4) node {$v_{3i-1}$};
\draw[fill=white] (5,0)coordinate(A50) circle(0.1cm);
\draw (5,-0.4) node {$v_{3i}$};
\draw[fill=white] (6,0)coordinate(A60) circle(0.1cm);
\draw (6,-0.4) node {$v_{n-2}$};
\draw[fill=white] (7,0)coordinate(A70) circle(0.1cm);
\draw (7,-0.4) node {$v_{n-1}$};
\draw[fill=white] (8,0)coordinate(A80) circle(0.1cm);
\draw (8,-0.4) node {$v_{n}$};
\end{tikzpicture}

\caption{A partition of the vertices of $G_{1,n},$ for $n=3k.$} 
\label{fig_labeling_thmG1n_case3-1}

\end{center}
\end{figure}

On the other hand, let $g$ be an integer $\{2\}$-dominating function of $G_{1,n}$ such that
$$g(v_{j})=\begin{cases}
2, \quad \text{if}~j\in \{3i-1\mid 1\leq i\leq k\};\\
0, \quad \text{otherwise}.
\end{cases}$$
Please refer to Figure \ref{fig_labeling_thmG1n_case3-2} for the mapped values on $G_{1,n}$ by $g$, in which a vertex $v$ is filled with white and black if $g(v)=0$ and $2,$ respectively. Thus,
$$\gamma_{\{2\}}(G_{1,n})\leq \sum_{i=1}^n g(v_i) = 2k = 2\left\lceil\frac{n}{3}\right\rceil.$$

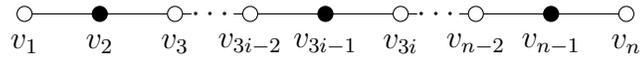
\begin{figure}[h]
\begin{center}
\begin{tikzpicture}
\draw (0,0)--(2,0);
\draw (2,0)--(2.22,0);
\draw (2.78,0)--(3,0);
\draw (2.5,0) node{$\cdots$};
\draw (3,0)--(5,0);
\draw (5.5,0) node{$\cdots$};
\draw (5,0)--(5.22,0);
\draw (5.78,0)--(8,0);
\draw[fill=white] (0,0)coordinate(A00) circle(0.1cm);
\draw (0,-0.4) node {$v_{1}$};
\draw[fill] (1,0)coordinate(A10) circle(0.1cm);
\draw (1,-0.4) node {$v_{2}$};
\draw[fill=white] (2,0)coordinate(A20) circle(0.1cm);
\draw (2,-0.4) node {$v_{3}$};
\draw[fill=white] (3,0)coordinate(A30) circle(0.1cm);
\draw (3,-0.4) node {$v_{3i-2}$};
\draw[fill] (4,0)coordinate(A40) circle(0.1cm);
\draw (4,-0.4) node {$v_{3i-1}$};
\draw[fill=white] (5,0)coordinate(A50) circle(0.1cm);
\draw (5,-0.4) node {$v_{3i}$};
\draw[fill=white] (6,0)coordinate(A60) circle(0.1cm);
\draw (6,-0.4) node {$v_{n-2}$};
\draw[fill] (7,0)coordinate(A70) circle(0.1cm);
\draw (7,-0.4) node {$v_{n-1}$};
\draw[fill=white] (8,0)coordinate(A80) circle(0.1cm);
\draw (8,-0.4) node {$v_{n}$};
\end{tikzpicture}

\caption{An integer $\{2\}$-dominating function of $G_{1,n},$ for $n=3k.$} 
\label{fig_labeling_thmG1n_case3-2}

\end{center}
\end{figure}

\end{proof}

\medskip

Before calculating the values of $\gamma_{\{2\}}(G_{2,n})$, we introduce an inspiring lemma.
\begin{lem}     \label{lem_max_degree}
Let $G=(V,E)$ be a graph with maximum degree $\Delta$ and $f$ be a minimal integer $\{2\}$-dominating function of $G$. Then
$$\gamma_{\{2\}}(G)=\sum_{v\in V}f(v)\geq\frac{2|V|}{\Delta+1}$$
with equality if and only if for every $v\in V$ we have $\sum_{u\in N[v]}f(u)=2,$ and for every vertex $w$ with $\deg(w)<\Delta$ we have $f(w)=0.$
\end{lem}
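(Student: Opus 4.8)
The plan is to sum the integer $\{2\}$-domination constraint over all vertices and exploit the symmetry of the closed-neighborhood relation. Since $f$ is an integer $\{2\}$-dominating function, we have $\sum_{u\in N[v]}f(u)\geq 2$ for each $v\in V$. Summing this inequality over all $v\in V$ gives
$$2|V|\leq\sum_{v\in V}\sum_{u\in N[v]}f(u).$$
First I would rewrite the double sum by interchanging the order of summation. The key observation is that $u\in N[v]$ if and only if $v\in N[u]$, since $N[v]=N(v)\cup\{v\}$ and the adjacency relation is symmetric. Consequently each value $f(u)$ is counted exactly $|N[u]|=\deg(u)+1$ times, so the right-hand side equals $\sum_{u\in V}(\deg(u)+1)f(u)$.

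Next I would bound each coefficient using the maximum degree. Since $\deg(u)\leq\Delta$ for every $u\in V$, we have $\deg(u)+1\leq\Delta+1$, and therefore
$$\sum_{u\in V}(\deg(u)+1)f(u)\leq(\Delta+1)\sum_{u\in V}f(u)=(\Delta+1)\gamma_{\{2\}}(G).$$
Chaining the two displays yields $2|V|\leq(\Delta+1)\gamma_{\{2\}}(G)$, which rearranges to the claimed bound.

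For the equality characterization, I would trace exactly when each of the two inequalities above is tight. The first inequality is an equality precisely when every summand attains its lower bound, that is, when $\sum_{u\in N[v]}f(u)=2$ for every $v\in V$. The second inequality is an equality precisely when $(\deg(u)+1)f(u)=(\Delta+1)f(u)$ for every $u$; this can fail only when $f(u)>0$ while $\deg(u)<\Delta$, so the condition is equivalent to requiring $f(w)=0$ for every vertex $w$ with $\deg(w)<\Delta$. Since the overall bound is attained exactly when both inequalities are simultaneously tight, the bound holds with equality if and only if both stated conditions hold, which is precisely the criterion.

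There is no substantial obstacle here; the argument is a clean double-counting estimate resting on the symmetry $u\in N[v]\Leftrightarrow v\in N[u]$ and the uniform bound $\deg(u)\leq\Delta$. The only point demanding care is the bookkeeping in the equality analysis, where I must confirm that the two tightness conditions are genuinely independent and that their conjunction is both necessary and sufficient. In particular I would verify that the second condition is correctly phrased in terms of vertices of non-maximum degree rather than in terms of vertices carrying positive weight, since it is the contrapositive ``$f(w)>0\Rightarrow\deg(w)=\Delta$'' that emerges directly from the coefficientwise comparison.
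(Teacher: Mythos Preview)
Your proof is correct and follows essentially the same double-counting argument as the paper: both sum the domination constraint over all vertices, swap the order of summation via the symmetry $u\in N[v]\Leftrightarrow v\in N[u]$, and bound $\deg(u)+1$ by $\Delta+1$, with the equality analysis obtained by tracking tightness in each of the two inequalities. The only cosmetic difference is that the paper writes the identity $\sum_{u}\sum_{v\in N[u]}f(u)=\sum_{v}\sum_{u\in N[v]}f(u)$ explicitly before bounding each side, whereas you chain the two estimates directly.
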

\begin{proof}
We use two-way counting on the value of
$$\sum_{u\in V}\sum_{v\in N[u]} f(u) = \sum_{u,v\in V:~uv\in E~\text{or}~u=v}f(u) = \sum_{v\in V}\sum_{u\in N[v]}f(u)$$
to show the result. Since $|N[u]|\leq \Delta+1$ for any $u\in V,$ we have
$$\sum_{u\in V}\sum_{v\in N[u]} f(u)\leq \sum_{u\in V}(\Delta+1)f(u) = \gamma_{\{2\}}(G)\cdot (\Delta+1)$$
with equality if and only if $f(u)=0$ for every vertex $u$ with $\deg(u)<\Delta$. On the other hand, since $\sum_{u\in N[v]}f(u)\geq 2$ for any $v\in V,$ we have
$$\sum_{v\in V}\sum_{u\in N[v]}f(u)\geq \sum_{v\in V}2 = 2|V|$$
with equality if and only if $\sum_{u\in N[v]}f(u)= 2$ for every $v\in V.$ The result follows.
\end{proof}

Then, the exact value of $\gamma_{\{2\}}(G_{2,n})$ is given in the following.
\begin{thm}     \label{thm_Gridmn_2}
For positive integer $n$, we have
$$\gamma_{\{2\}}(G_{2,n}) =  n+1.$$
\end{thm}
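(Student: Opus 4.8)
The plan is to establish the two matching inequalities $\gamma_{\{2\}}(G_{2,n}) \le n+1$ and $\gamma_{\{2\}}(G_{2,n}) \ge n+1$ separately, disposing of the small cases $n \in \{1,2\}$ quickly and concentrating the real work on $n \ge 3$.

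For the upper bound I would exhibit an explicit integer $\{2\}$-dominating function $g$ of weight $n+1$. The idea is to make every odd-indexed column \emph{full}, that is, to set $g(v_{1,j}) = g(v_{2,j}) = 1$ for each odd $j$; when $n$ is even I additionally set $g(v_{1,n}) = 1$. In either parity this yields total weight exactly $n+1$. The verification sorts the vertices into two kinds: a vertex lying in a full column sees its partner in the same column, so its closed-neighborhood sum is at least $2$; a vertex $v_{i,j}$ with $j$ even and $2 \le j \le n-1$ is flanked by two full columns and collects value $1$ from $v_{i,j-1}$ and value $1$ from $v_{i,j+1}$. The only column that is neither full nor flanked on both sides is column $n$ when $n$ is even, and the extra stone $g(v_{1,n}) = 1$ is placed precisely to dominate its two vertices. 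This checking is routine.

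For the lower bound I would invoke Lemma~\ref{lem_max_degree}. For $n = 1$ we have $\Delta = 1$ and the bound reads $\gamma_{\{2\}} \ge 2 = n+1$; for $n = 2$ we have $\Delta = 2$ and the bound gives $\gamma_{\{2\}} \ge \lceil 8/3 \rceil = 3 = n+1$ by integrality. For $n \ge 3$ the maximum degree is $\Delta = 3$, attained at every interior vertex, and the lemma only yields $\gamma_{\{2\}}(G_{2,n}) \ge \frac{2\cdot 2n}{4} = n$, short of the target by one. The decisive step is to rule out equality. If $\gamma_{\{2\}}(G_{2,n}) = n$, then a minimal $f$ meets the equality conditions of the lemma: $f$ vanishes on every vertex of degree $< 3$ — in particular on the four corners — and $f(N[v]) = 2$ for all $v$. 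Looking at the corner $v_{1,1}$, the vanishing of $f(v_{1,1})$ and $f(v_{2,1})$ together with $f(N[v_{1,1}]) = 2$ forces $f(v_{1,2}) = 2$; symmetrically $f(N[v_{2,1}]) = 2$ forces $f(v_{2,2}) = 2$. But then $f(N[v_{1,2}]) \ge f(v_{1,2}) + f(v_{2,2}) = 4 > 2$, contradicting that every closed-neighborhood sum equals $2$. Hence equality cannot hold, and since $\gamma_{\{2\}}(G_{2,n})$ is an integer strictly exceeding $n$, we get $\gamma_{\{2\}}(G_{2,n}) \ge n+1$.

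The main obstacle is exactly this last refinement: Lemma~\ref{lem_max_degree} is off by one for $G_{2,n}$, so the heart of the argument is the corner-forcing contradiction showing that the degree-deficient corners cannot simultaneously be starved to $0$ while keeping all closed-neighborhood sums down to $2$. Combining the two bounds then gives $\gamma_{\{2\}}(G_{2,n}) = n+1$.
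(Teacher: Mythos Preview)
Your argument is correct. The upper-bound construction (full odd columns, one extra stone at $v_{1,n}$ when $n$ is even) and the lower-bound step via Lemma~\ref{lem_max_degree} together with the corner-forcing contradiction are exactly what the paper does in its even-$n$ case. The only difference is for odd $n$: the paper handles that case by a separate packing argument, partitioning $V_{2,n}$ into $k$ pairwise disjoint closed neighborhoods (those of $v_{1,4j-3}$ and $v_{2,4\ell-1}$), each of which must carry weight at least $2$, giving $\gamma_{\{2\}}\ge 2k=n+1$ directly. You instead observe that the lemma-plus-contradiction argument already works for every $n\ge 3$ regardless of parity, and dispose of $n\in\{1,2\}$ as base cases via the lemma alone. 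Your route is a little more uniform; the paper's partition argument for odd $n$ is a little more elementary in that it avoids invoking the equality characterization in Lemma~\ref{lem_max_degree}.
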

\begin{proof}
Let $\{v_{i,j}\mid  1\leq i\leq 2, 1\leq j\leq n\}$ be the vertex set of $G_{2,n}$, where $v_{i,j} v_{i,j+1}$ is an edge for each $1\leq i \leq 2$ and $1\leq j\leq n-1$, and $v_{1,\ell}v_{2,\ell}$ is an edge for each $1\leq\ell\leq n.$ Let $f$ be a minimal integer $\{2\}$-dominating function of $G_{2,n}.$ We show the result according to the parity of $n$.

\smallskip

\noindent{\bf Case 1.} Assume $n=2k-1$ for some positive integer $k.$ In order to dominate $v_{1,4j-3}$ for each $1\leq j\leq\left\lceil\frac{k}{2}\right\rceil$ and $v_{2,4\ell-1}$ for each $1\leq\ell\leq\left\lfloor\frac{k}{2}\right\rfloor.$ We have $$f(v_{1,4j-4})+f(v_{1,4j-3})+f(v_{1,4j-2})+f(v_{2,4j-3})\geq 2$$ for each $1\leq j\leq\left\lceil\frac{k}{2}\right\rceil,$ and $$f(v_{2,4\ell-2})+f(v_{2,4\ell-1})+f(v_{2,4\ell})+f(v_{1,4\ell-1})\geq 2$$ for each $1\leq\ell\leq\left\lfloor\frac{k}{2}\right\rfloor,$ respectively, where $f(v_{s,t}):=0$ if $t<1$ or $t>n.$ Thus, $$\gamma_{\{2\}}(G_{2,n})=\sum_{i=1}^2\sum_{j=1}^n f(v_{i,j})\geq2\left(\left\lceil\frac{k}{2}\right\rceil+\left\lfloor\frac{k}{2}\right\rfloor\right)=2k=n+1.$$
Please refer to Figure \ref{fig_labeling_thmG2n_case1-1} for the considered partition on the vertices of $G_{2,n}$.

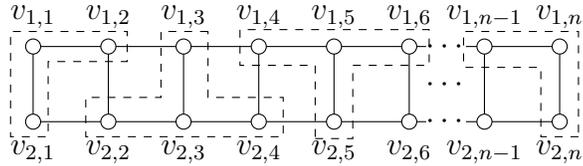
\begin{figure}[h]
\begin{center}
\begin{tikzpicture}


\draw[dashed]  (.2,.5)-|(.2,.8)-|(1.25,1.2)-|(-.3,-0.2)-|cycle;
\draw[dashed]  (3.75,-0.225)-|(4.25,0.75)-|(5.265,1.225)-|(2.75,0.75)-|cycle;
\draw[dashed]  (2.325,1.2)-|(1.7,0.3225)-|(0.7,-0.2)-|(3.325,0.3225)-|cycle;
\draw[dashed]  (7.25,-0.2)-|(6.75,0.5)-|(6.75,0.7225)-|(5.7225,1.2)-|cycle;

\draw (0,0)--(0,1);
\draw (1,0)--(1,1);
\draw (2,0)--(2,1);
\draw (3,0)--(3,1);
\draw (4,0)--(4,1);
\draw (5,0)--(5,1);
\draw (6,0)--(6,1);
\draw (7,0)--(7,1);
\draw (0,0)--(5.225,0);
\draw (5.755,0)--(7,0);
\draw (0,1)--(5.225,1);
\draw (5.755,1)--(7,1);

\draw (5.5,0) node{$\cdots$};
\draw (5.5,0.5) node{$\cdots$};
\draw (5.5,1) node{$\cdots$};

\draw (0,-0.4) node {$v_{2,1}$};
\draw (1,-0.4) node {$v_{2,2}$};
\draw (2,-0.4) node {$v_{2,3}$};
\draw (3,-0.4) node {$v_{2,4}$};
\draw (4,-0.4) node {$v_{2,5}$};
\draw (5,-0.4) node {$v_{2,6}$};
\draw (6,-0.4) node {$v_{2,n-1}$};
\draw (7,-0.4) node {$v_{2,n}$};
\draw (0,1.4) node {$v_{1,1}$};
\draw (1,1.4) node {$v_{1,2}$};
\draw (2,1.4) node {$v_{1,3}$};
\draw (3,1.4) node {$v_{1,4}$};
\draw (4,1.4) node {$v_{1,5}$};
\draw (5,1.4) node {$v_{1,6}$};
\draw (6,1.4) node {$v_{1,n-1}$};
\draw (7,1.4) node {$v_{1,n}$};
\draw[fill=white] (0,0)coordinate(A00) circle(0.1cm);
\draw[fill=white] (1,0)coordinate(A10) circle(0.1cm);
\draw[fill=white] (2,0)coordinate(A20) circle(0.1cm);
\draw[fill=white] (3,0)coordinate(A30) circle(0.1cm);
\draw[fill=white] (4,0)coordinate(A40) circle(0.1cm);
\draw[fill=white] (5,0)coordinate(A50) circle(0.1cm);
\draw[fill=white] (6,0)coordinate(A60) circle(0.1cm);
\draw[fill=white] (7,0)coordinate(A70) circle(0.1cm);
\draw[fill=white] (0,1)coordinate(A01) circle(0.1cm);
\draw[fill=white] (1,1)coordinate(A11) circle(0.1cm);
\draw[fill=white] (2,1)coordinate(A21) circle(0.1cm);
\draw[fill=white] (3,1)coordinate(A33) circle(0.1cm);
\draw[fill=white] (4,1)coordinate(A41) circle(0.1cm);
\draw[fill=white] (5,1)coordinate(A51) circle(0.1cm);
\draw[fill=white] (6,1)coordinate(A61) circle(0.1cm);
\draw[fill=white] (7,1)coordinate(A71) circle(0.1cm);
\end{tikzpicture}

\caption{A partition of the vertices of $G_{2,n},$ for $n=2k-1.$} 
\label{fig_labeling_thmG2n_case1-1}

\end{center}
\end{figure}

On the other hand, let $g$ be an integer $\{2\}$-dominating function of $G_{2,n}$ such that
$$g(v_{i,j})=\begin{cases}
1, \quad \text{if}~1\leq i\leq2~\text{and}~j=2\ell-1~\text{for}~1\leq\ell\leq k;\\
0, \quad\text{otherwise}.
\end{cases}$$
See Figure \ref{fig_labeling_thmG2n_case1-2} for an example of the function $g$ on a grid $G_{2,7},$ in which a vertex $v$ is filled with white and slashes if $g(v)=0$ and $1,$ respectively. Therefore,
$$\gamma_{\{2\}}(G_{2,n})\leq \sum_{i=1}^2 \sum_{j=1}^{2k} g(v_{i,j})=2k=n+1.$$

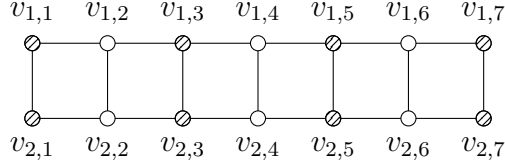
\begin{figure}[h]
\begin{center}
\begin{tikzpicture}

\draw (0,0)--(0,1);
\draw (6,0)--(6,1);
\draw (5,0)--(5,1);
\draw (4,0)--(4,1);
\draw (3,0)--(3,1);
\draw (2,0)--(2,1);
\draw (1,0)--(1,1);
\draw (0,0)--(6,0);
\draw (0,1)--(6,1);


\draw (0,-0.4) node {$v_{2,1}$};
\draw (1,-0.4) node {$v_{2,2}$};
\draw (2,-0.4) node {$v_{2,3}$};
\draw (3,-0.4) node {$v_{2,4}$};
\draw (4,-0.4) node {$v_{2,5}$};
\draw (5,-0.4) node {$v_{2,6}$};
\draw (6,-0.4) node {$v_{2,7}$};
\draw (0,1.4) node {$v_{1,1}$};
\draw (1,1.4) node {$v_{1,2}$};
\draw (2,1.4) node {$v_{1,3}$};
\draw (3,1.4) node {$v_{1,4}$};
\draw (4,1.4) node {$v_{1,5}$};
\draw (5,1.4) node {$v_{1,6}$};
\draw (6,1.4) node {$v_{1,7}$};
\draw[fill=white] (0,0)coordinate(A00) circle(0.1cm);
\draw[fill=white] (1,0)coordinate(A10) circle(0.1cm);
\draw[fill=white] (2,0)coordinate(A20) circle(0.1cm);
\draw[fill=white] (3,0)coordinate(A30) circle(0.1cm);
\draw[fill=white] (4,0)coordinate(A40) circle(0.1cm);
\draw[fill=white] (5,0)coordinate(A50) circle(0.1cm);
\draw[fill=white] (6,0)coordinate(A60) circle(0.1cm);
\draw[fill=white] (0,1)coordinate(A01) circle(0.1cm);
\draw[fill=white] (1,1)coordinate(A11) circle(0.1cm);
\draw[fill=white] (2,1)coordinate(A21) circle(0.1cm);
\draw[fill=white] (3,1)coordinate(A33) circle(0.1cm);
\draw[fill=white] (4,1)coordinate(A41) circle(0.1cm);
\draw[fill=white] (5,1)coordinate(A53) circle(0.1cm);
\draw[fill=white] (6,1)coordinate(A61) circle(0.1cm);
\draw[pattern=north east lines] (0,0)coordinate(A00) circle(0.1cm);
\draw[pattern=north east lines] (0,1)coordinate(A00) circle(0.1cm);
\draw[pattern=north east lines] (2,0)coordinate(A00) circle(0.1cm);
\draw[pattern=north east lines] (2,1)coordinate(A00) circle(0.1cm);
\draw[pattern=north east lines] (4,0)coordinate(A00) circle(0.1cm);
\draw[pattern=north east lines] (4,1)coordinate(A00) circle(0.1cm);
\draw[pattern=north east lines] (6,0)coordinate(A00) circle(0.1cm);
\draw[pattern=north east lines] (6,1)coordinate(A00) circle(0.1cm);
\end{tikzpicture}

\caption{An example of the function $g$ on a grid $G_{2,7}.$} 
\label{fig_labeling_thmG2n_case1-2}

\end{center}
\end{figure}

\smallskip

\noindent{\bf Case 2.}
Assume $n=2k$ for some positive integer $k.$ Since the maximum degree in $G_{2,n}$ is 3, by Lemma~\ref{lem_max_degree} we have
$$\gamma_{\{2\}}(G_{2,n})\geq\frac{2\cdot(4k)}{3+1}=2k.$$
To show $\gamma_{\{2\}}(G_{2,n})\geq2k+1,$ suppose to the contrary that $\gamma_{\{2\}}(G_{2,n})=2k.$ Then, the second part of Lemma~\ref{lem_max_degree} implies that the given minimal integer $\{2\}$-dominating function $f$ satisfies:
\begin{enumerate}
\item[(i)] $f(v_{1,1})=f(v_{2,1})=0$ since their degrees are less than the maximum degree $3,$
\item[(ii)] $f(v_{1,2})=f(v_{2,2})=2$ since $\sum_{v\in N[v_{1,1}]}f(v)=\sum_{v\in N[v_{2,1}]}f(v)=2,$ and
\item[(iii)] $\sum_{v\in N[v_{1,2}]}f(v)=2.$
\end{enumerate}
However, (ii) obviously contradicts to (iii) because
$$4=f(v_{1,2})+f(v_{2,2})\leq \sum_{v\in N[v_{1,2}]}f(v)=2.$$

On the other hand, to show $\gamma_{\{2\}}(G_{2,n})\leq 2k+1,$ let $g$ be an integer $\{2\}$-dominating function of $G_{2,n}$ such that $$g(v_{i,j})=\begin{cases}
    1, \quad \text{if}~1\leq i\leq2~\text{and}~j=2\ell-1~\text{for}~1\leq\ell\leq k;\\
    1, \quad \text{if}~i=1~\text{and}~j=2k;\\
    0, \quad \text{otherwise.}
\end{cases}$$
See Figure~\ref{fig_labeling_thmG2n_case2-2} for an example of the function $g$ on a grid $G_{2,6},$ in which a vertex $v$ is filled with white and slashes if $g(v)=0$ and $1$, respectively. Therefore,
$$\gamma_{2}(G_{2,n})\leq\sum_{i=1}^2\sum_{j=1}^n g(v_{i,j})=2k+1=n+1.$$

\begin{figure}[h]
\begin{center}
\begin{tikzpicture}

\draw (0,0)--(0,1);
\draw (2,0)--(2,1);
\draw (3,0)--(3,1);
\draw (4,0)--(4,1);
\draw (5,0)--(5,1);
\draw (0,0)--(5,0);
\draw (0,1)--(1,1);
\draw (1,1)--(2,1);
\draw (2,1)--(3,1);
\draw (3,1)--(4,1);
\draw (4,1)--(5,1);
\draw (1,1)--(1,0);


\draw (0,-0.4) node {$v_{2,1}$};
\draw (1,-0.4) node {$v_{2,2}$};
\draw (2,-0.4) node {$v_{2,3}$};
\draw (3,-0.4) node {$v_{2,4}$};
\draw (4,-0.4) node {$v_{2,5}$};
\draw (5,-0.4) node {$v_{2,6}$};
\draw (0,1.4) node {$v_{1,1}$};
\draw (1,1.4) node {$v_{1,2}$};
\draw (2,1.4) node {$v_{1,3}$};
\draw (3,1.4) node {$v_{1,4}$};
\draw (4,1.4) node {$v_{1,5}$};
\draw (5,1.4) node {$v_{1,6}$};
\draw[fill=white] (0,0)coordinate(A00) circle(0.1cm);
\draw[fill=white] (1,0)coordinate(A10) circle(0.1cm);
\draw[fill=white] (2,0)coordinate(A20) circle(0.1cm);
\draw[fill=white] (3,0)coordinate(A30) circle(0.1cm);
\draw[fill=white] (4,0)coordinate(A40) circle(0.1cm);
\draw[fill=white] (5,0)coordinate(A50) circle(0.1cm);
\draw[fill=white] (0,1)coordinate(A01) circle(0.1cm);
\draw[fill=white] (1,1)coordinate(A11) circle(0.1cm);
\draw[fill=white] (2,1)coordinate(A21) circle(0.1cm);
\draw[fill=white] (3,1)coordinate(A33) circle(0.1cm);
\draw[fill=white] (4,1)coordinate(A41) circle(0.1cm);
\draw[fill=white] (5,1)coordinate(A53) circle(0.1cm);
\draw[pattern=north east lines] (0,0)coordinate(A00) circle(0.1cm);
\draw[pattern=north east lines] (0,1)coordinate(A00) circle(0.1cm);
\draw[pattern=north east lines] (2,0)coordinate(A00) circle(0.1cm);
\draw[pattern=north east lines] (2,1)coordinate(A00) circle(0.1cm);
\draw[pattern=north east lines] (4,0)coordinate(A00) circle(0.1cm);
\draw[pattern=north east lines] (4,1)coordinate(A00) circle(0.1cm);
\draw[pattern=north east lines] (5,1)coordinate(A00) circle(0.1cm);
\end{tikzpicture}

\caption{An example of the function $g$ on a grid $G_{2,6}.$} 
\label{fig_labeling_thmG2n_case2-2}

\end{center}
\end{figure}
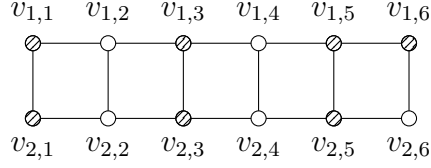

\end{proof}

\medskip

In the end of this section, an upper bound of $\gamma_{\{2\}}(G_{3,n})$ is proposed.
\begin{thm}     \label{thm_Gridmn_3}
For positive integer $n$, we have
$$\gamma_{\{2\}}(G_{3,n})\leq\left\lfloor\frac{3n}{2}\right\rfloor + 1.$$
\end{thm}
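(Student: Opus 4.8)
Since $\gamma_{\{2\}}(G_{3,n})$ is defined as a minimum over all integer $\{2\}$-dominating functions, it suffices to exhibit a single such function $g$ of $G_{3,n}$ whose total value $g(V_{3,n})$ is at most $\lfloor 3n/2\rfloor+1$; the argument is therefore purely constructive. The function I would use fills every second column completely and then repairs the ends. Concretely, set $g(v_{i,j})=1$ for all $i\in\{1,2,3\}$ whenever $j$ is even with $2\le j\le n$ (call such a column \emph{full}), set $g(v_{2,1})=1$, and, when $n$ is odd, additionally set $g(v_{2,n})=1$; let $g$ be $0$ on every remaining vertex. (For the degenerate case $n=1$ one simply takes $g(v_{2,1})=2$.)

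First I would verify that $g$ is an integer $\{2\}$-dominating function by inspecting three types of columns. If column $j$ is full, then $g(N[v_{1,j}])\ge g(v_{1,j})+g(v_{2,j})=2$, and symmetrically for $v_{3,j}$, while $g(N[v_{2,j}])\ge g(v_{1,j})+g(v_{2,j})+g(v_{3,j})=3$; hence full columns are self-dominated. If column $j$ is an \emph{empty} odd column lying strictly between two full columns $j-1$ and $j+1$, then each of its three vertices collects a $1$ from each side, e.g. $g(N[v_{2,j}])\ge g(v_{2,j-1})+g(v_{2,j+1})=2$, and likewise for $v_{1,j}$ and $v_{3,j}$. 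The only remaining vertices lie in the boundary odd column(s) carrying a middle-row $1$; there the top and bottom vertices are dominated by that repair value together with the $1$ supplied by the single adjacent full column, and the middle vertex is dominated by its own value plus the middle value of that adjacent full column.

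Second I would count $g(V_{3,n})$: each full column contributes $3$ and each middle-row repair contributes $1$. When $n$ is even the full columns are $2,4,\dots,n$, so $g(V_{3,n})=3\cdot\frac n2+1=\frac{3n}{2}+1=\lfloor 3n/2\rfloor+1$. When $n$ is odd the full columns are $2,4,\dots,n-1$ and there are two repairs, so $g(V_{3,n})=3\cdot\frac{n-1}{2}+2=\frac{3n+1}{2}=\lfloor 3n/2\rfloor+1$. In either parity the total meets the claimed bound, which completes the proof.

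The interior verification is routine; the delicate part is making the endpoint bookkeeping land exactly on the floor value. The key design choice forcing the $+1$ term is that a boundary empty odd column has only one full neighbor and so cannot be covered for free: it must be patched by a single unit in the middle row, and the two parities of $n$ differ precisely in whether one or two such patches are required. I would accompany the argument with two figures, one for each parity, displaying the full columns and the middle-row repairs, in the same spirit as the proofs of Theorems~\ref{thm_Gridmn_1} and~\ref{thm_Gridmn_2}.
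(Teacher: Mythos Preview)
Your construction and verification are correct and coincide exactly with the paper's own proof: the paper also places a $1$ on every vertex of each even-indexed column, a single $1$ at $v_{2,1}$, an additional $1$ at $v_{2,n}$ when $n>1$ is odd, and handles $n=1$ by $f(v_{2,1})=2$. The counting and the case split by parity are likewise identical.
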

\begin{proof}
Let $\{v_{i,j}\mid1\leq i\leq3, 1\leq j\leq n\}$ be the vertex set of $G_{3,n}$, in which $v_{i1,j1}v_{i2,j2}$ is an edge if and only if $|i_{1}-i_{2}|=1$ and $j_{1}=j_{2}$ or $i_{1}=i_{2}$ and $|j_{1}-j_{2}|=1.$ To prove the result, We give an integer $\{2\}$-dominating function of $G_{3,n}$ with $\sum_{i=1}^3\sum_{j=1}^n f(v_{i,j})=\left\lfloor\frac{3n}{2}\right\rfloor+1.$ If $n=1,$ let $f(v_{2,1})=2$ and $f(v_{1,1})=f(v_{3,1})=0.$ If $n$ is even, let $$f(v_{i,j})=\begin{cases}
        1, \quad\text{if}~(i,j)=(2,1)~\text{or}~j~\text{is even;}\\
        0, \quad \text{otherwise.}
\end{cases}$$
One can see that $$\sum_{i=1}^3\sum_{j=1}^n~f(v_{i,j})=1+\frac{3}{2}n.$$
See Figure~\ref{fig_labeling_thmG3n_case1}
for the given $f$ on a grid $G_{3,n},$ in which a vertex $v$ is filled with white and slashes if $f(v)=0$ and $1,$ respectively.

\begin{figure}[h]
\begin{center}
\begin{tikzpicture}

\draw (0,2)--(2.225,2);
\draw (2.785,2)--(5,2);
\draw (2.525,1.55) node{$\cdots$};
\draw (2.55,2) node{$\cdots$};
\draw (0,1)--(2.225,1);
\draw (2.785,1)--(5,1);
\draw (2.55,1) node{$\cdots$};
\draw (2.525,0.55) node{$\cdots$};
\draw (0,0)--(2.225,0);
\draw (2.785,0)--(5,0);
\draw (2.55,0) node{$\cdots$};
\draw (0,0)--(0,2);
\draw (1,0)--(1,2);
\draw (2,0)--(2,2);
\draw (3,0)--(3,2);
\draw (4,0)--(4,2);
\draw (5,0)--(5,2);


\draw (0.3,2.2) node {$v_{1,1}$};
\draw (1.3,2.2) node {$v_{1,2}$};
\draw (2.3,2.2) node {$v_{1,3}$};
\draw (3.5,2.2) node {$v_{1,n-2}$};
\draw (4.5,2.2) node {$v_{1,n-1}$};
\draw (5.38,2.2) node {$v_{1,n}$};
\draw (0.3,0.2) node {$v_{3,1}$};
\draw (1.3,0.2) node {$v_{3,2}$};
\draw (2.3,0.2) node {$v_{3,3}$};
\draw (3.5,0.2) node {$v_{3,n-2}$};
\draw (4.5,0.2) node {$v_{3,n-1}$};
\draw (5.38,0.2) node {$v_{3,n}$};
\draw (0.3,1.2) node {$v_{2,1}$};
\draw (1.3,1.2) node {$v_{2,2}$};
\draw (2.3,1.2) node {$v_{2,3}$};
\draw (3.5,1.2) node {$v_{2,n-2}$};
\draw (4.5,1.2) node {$v_{2,n-1}$};
\draw (5.38,1.2) node {$v_{2,n}$};
\draw[fill=white] (0,0)coordinate(A00) circle(0.1cm);
\draw[fill=white] (0,2)coordinate(A02) circle(0.1cm);
\draw[fill=white] (1,2)coordinate(A12) circle(0.1cm);
\draw[fill=white] (2,2)coordinate(A22) circle(0.1cm);
\draw[fill=white] (3,2)coordinate(A32) circle(0.1cm);
\draw[fill=white] (4,2)coordinate(A42) circle(0.1cm);
\draw[fill=white] (5,2)coordinate(A32) circle(0.1cm);
\draw[fill=white] (1,0)coordinate(A10) circle(0.1cm);
\draw[fill=white] (2,0)coordinate(A20) circle(0.1cm);
\draw[fill=white] (3,0)coordinate(A30) circle(0.1cm);
\draw[fill=white] (4,0)coordinate(A40) circle(0.1cm);
\draw[fill=white] (5,0)coordinate(A30) circle(0.1cm);
\draw[fill=white] (0,1)coordinate(A01) circle(0.1cm);
\draw[fill=white] (1,1)coordinate(A11) circle(0.1cm);
\draw[fill=white] (2,1)coordinate(A21) circle(0.1cm);
\draw[fill=white] (3,1)coordinate(A33) circle(0.1cm);
\draw[fill=white] (4,1)coordinate(A41) circle(0.1cm);
\draw[fill=white] (5,1)coordinate(A33) circle(0.1cm);
\draw[pattern=north east lines] (0,1)coordinate(A01) circle(0.1cm);
\draw[pattern=north east lines] (1,0)coordinate(A01) circle(0.1cm);
\draw[pattern=north east lines] (1,1)coordinate(A01) circle(0.1cm);
\draw[pattern=north east lines] (1,2)coordinate(A01) circle(0.1cm);
\draw[pattern=north east lines] (3,0)coordinate(A01) circle(0.1cm);
\draw[pattern=north east lines] (3,1)coordinate(A01) circle(0.1cm);
\draw[pattern=north east lines] (3,2)coordinate(A01) circle(0.1cm);
\draw[pattern=north east lines] (5,0)coordinate(A01) circle(0.1cm);
\draw[pattern=north east lines] (5,1)coordinate(A01) circle(0.1cm);
\draw[pattern=north east lines] (5,2)coordinate(A01) circle(0.1cm);
\end{tikzpicture}

\caption{An integer $\{2\}$-dominating function $f$ on a grid $G_{3,n}$ for even $n.$} 
\label{fig_labeling_thmG3n_case1}

\end{center}
\end{figure}
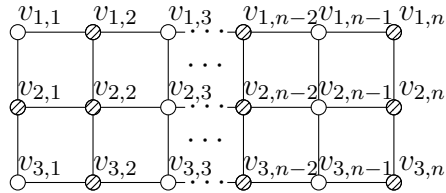

If $n>1$ is odd, let $$f(v_{i,j})=\begin{cases}
    1, \quad \text{if}~(i,j)=(2,1)~\text{or}~(2,n)~\text{or}~j~\text{ is even};\\
    0, \quad \text{otherwise.}\\
\end{cases}$$
One can see that
$$\sum_{i=1}^3\sum_{j=1}^n f(v_{i,j})=1+1+\frac{n-1}{2}\cdot3=1+\left\lfloor\frac{3n}{2}\right\rfloor.$$
See Figure~\ref{fig_labeling_thmG3n_case2} for the given $f$ on a grid $G_{3,n},$ in which a vertex $v$ is filled with white and slashes if $f(v)=0$ and $1,$ respectively. The proof is completed.

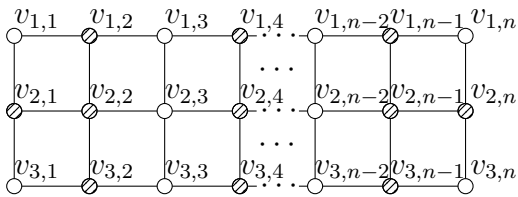
\begin{figure}[h]
\begin{center}
\begin{tikzpicture}

\draw (0,2)--(3.225,2);
\draw (3.785,2)--(6,2);
\draw (3.525,1.55) node{$\cdots$};
\draw (3.55,2) node{$\cdots$};
\draw (0,1)--(3.2225,1);
\draw (3.785,1)--(6,1);
\draw (3.55,1) node{$\cdots$};
\draw (3.525,0.55) node{$\cdots$};
\draw (0,0)--(3.225,0);
\draw (3.785,0)--(6,0);
\draw (3.55,0) node{$\cdots$};
\draw (0,0)--(0,2);
\draw (1,0)--(1,2);
\draw (2,0)--(2,2);
\draw (3,0)--(3,2);
\draw (4,0)--(4,2);
\draw (5,0)--(5,2);
\draw (6,0)--(6,2);

\draw (0.3,2.2) node {$v_{1,1}$};
\draw (1.3,2.2) node {$v_{1,2}$};
\draw (2.3,2.2) node {$v_{1,3}$};
\draw (3.3,2.2) node {$v_{1,4}$};
\draw (4.5,2.2) node {$v_{1,n-2}$};
\draw (5.5,2.2) node {$v_{1,n-1}$};
\draw (6.38,2.2) node {$v_{1,n}$};
\draw (0.3,0.2) node {$v_{3,1}$};
\draw (1.3,0.2) node {$v_{3,2}$};
\draw (2.3,0.2) node {$v_{3,3}$};
\draw (3.3,0.2) node {$v_{3,4}$};
\draw (4.5,0.2) node {$v_{3,n-2}$};
\draw (5.5,0.2) node {$v_{3,n-1}$};
\draw (6.38,0.2) node {$v_{3,n}$};
\draw (0.3,1.2) node {$v_{2,1}$};
\draw (1.3,1.2) node {$v_{2,2}$};
\draw (2.3,1.2) node {$v_{2,3}$};
\draw (3.3,1.2) node {$v_{2,4}$};
\draw (4.5,1.2) node {$v_{2,n-2}$};
\draw (5.5,1.2) node {$v_{2,n-1}$};
\draw (6.38,1.2) node {$v_{2,n}$};
\draw[fill=white] (0,0)coordinate(A00) circle(0.1cm);
\draw[fill=white] (0,1)coordinate(A01) circle(0.1cm);
\draw[fill=white] (0,2)coordinate(A02) circle(0.1cm);
\draw[fill=white] (1,2)coordinate(A12) circle(0.1cm);
\draw[fill=white] (2,2)coordinate(A22) circle(0.1cm);
\draw[fill=white] (3,2)coordinate(A32) circle(0.1cm);
\draw[fill=white] (4,2)coordinate(A42) circle(0.1cm);
\draw[fill=white] (5,2)coordinate(A52) circle(0.1cm);
\draw[fill=white] (6,2)coordinate(A62) circle(0.1cm);
\draw[fill=white] (1,0)coordinate(A10) circle(0.1cm);
\draw[fill=white] (2,0)coordinate(A20) circle(0.1cm);
\draw[fill=white] (3,0)coordinate(A30) circle(0.1cm);
\draw[fill=white] (4,0)coordinate(A40) circle(0.1cm);
\draw[fill=white] (5,0)coordinate(A50) circle(0.1cm);
\draw[fill=white] (6,0)coordinate(A60) circle(0.1cm);
\draw[fill=white] (1,1)coordinate(A11) circle(0.1cm);
\draw[fill=white] (2,1)coordinate(A21) circle(0.1cm);
\draw[fill=white] (3,1)coordinate(A31) circle(0.1cm);
\draw[fill=white] (4,1)coordinate(A41) circle(0.1cm);
\draw[fill=white] (5,1)coordinate(A51) circle(0.1cm);
\draw[fill=white] (6,1)coordinate(A61) circle(0.1cm);
\draw[pattern=north east lines] (0,1)coordinate(A01) circle(0.1cm);
\draw[pattern=north east lines] (1,0)coordinate(A01) circle(0.1cm);
\draw[pattern=north east lines] (1,1)coordinate(A01) circle(0.1cm);
\draw[pattern=north east lines] (1,2)coordinate(A01) circle(0.1cm);
\draw[pattern=north east lines] (3,0)coordinate(A01) circle(0.1cm);
\draw[pattern=north east lines] (3,1)coordinate(A01) circle(0.1cm);
\draw[pattern=north east lines] (3,2)coordinate(A01) circle(0.1cm);
\draw[pattern=north east lines] (5,0)coordinate(A01) circle(0.1cm);
\draw[pattern=north east lines] (5,1)coordinate(A01) circle(0.1cm);
\draw[pattern=north east lines] (5,2)coordinate(A01) circle(0.1cm);
\draw[pattern=north east lines] (6,1)coordinate(A01) circle(0.1cm);
\end{tikzpicture}

\caption{An example of the map the values on a grid $G_{3,n}$ for odd $n.$} 
\label{fig_labeling_thmG3n_case2}

\end{center}
\end{figure}
\end{proof}

We believe that the upper bound for $\gamma_{\{2\}}(G_{3,n})$ in Theorem~\ref{thm_Gridmn_3} is also a lower bound.
\begin{conj}  \label{conj_gamma2G3n}
For positive integer $n$, we have
$$\gamma_{\{2\}}(G_{3,n})=\left\lfloor\frac{3n}{2}\right\rfloor + 1.$$
\end{conj}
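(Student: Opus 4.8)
The upper bound is exactly Theorem~\ref{thm_Gridmn_3}, so the whole task is the matching lower bound $\gamma_{\{2\}}(G_{3,n})\ge\lfloor 3n/2\rfloor+1$. Write $t_j=f(v_{1,j})$, $m_j=f(v_{2,j})$, $b_j=f(v_{3,j})$ and $c_j=t_j+m_j+b_j$ for the weight of column $j$, and set $C=\sum_{j=1}^n c_j$. First I would record two routine reductions. Since reducing any value $f(v)\ge 3$ to $2$ keeps every closed-neighbourhood sum at least $2$ (each $u\in N[v]$ already had $f(N[u])\ge f(v)\ge 3$), we may assume $f$ takes values in $\{0,1,2\}$. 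Next, because $2C$ is an even integer, the formula $\lfloor 3n/2\rfloor+1$ follows at once from the single inequality $2C\ge 3n+1$: for even $n$ the right-hand side $3n+1$ is odd, so $2C\ge 3n+1$ forces $2C\ge 3n+2$ and $C\ge 3n/2+1$, while for odd $n$ we get $C\ge(3n+1)/2=\lfloor 3n/2\rfloor+1$ directly. Thus the goal reduces to proving $2\sum_j c_j\ge 3n+1$.

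Before describing the main argument I would explain why no averaging of the domination constraints can succeed, so as to justify an integrality-based approach. The maximum degree of $G_{3,n}$ is $4$, so Lemma~\ref{lem_max_degree} yields only $\gamma_{\{2\}}(G_{3,n})\ge 6n/5$. More tellingly, the linear-programming relaxation already admits a fractional solution beating the target: taking $t_j=b_j=4/7$ and $m_j=2/7$ in every column satisfies all constraints with total cost $10n/7<3n/2$. Consequently every bound obtained by summing the inequalities $f(N[v])\ge 2$ with nonnegative multipliers (as in the proofs of Theorem~\ref{thm_Gridmn_1} and Lemma~\ref{lem_max_degree}) is capped strictly below $3n/2$, and the extra strength must come from the integrality of $f$.

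The plan is therefore a discharging argument on columns that exploits integrality through a few local forcing lemmas. The basic observations are: (i) if $c_j=0$ then the three vertices of column $j$ are dominated entirely from the two sides, giving $t_{j-1}+t_{j+1}\ge2$, $m_{j-1}+m_{j+1}\ge2$ and $b_{j-1}+b_{j+1}\ge2$, whence $c_{j-1}+c_{j+1}\ge6$; and (ii) a short case check over two or three consecutive columns shows that a ``deficient'' column, one with $c_j\le 1$, cannot have both of its neighbours light, because the top, middle, and bottom constraints of the column cannot all be met otherwise. Assigning to each column the charge $e_j=2c_j-3$, so that $\sum_j e_j=2C-3n$, these lemmas say that every deficient column ($e_j<0$) sits next to a column carrying a large surplus. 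I would then decompose the columns into blocks, grouping each heavy column with the light columns it is forced to subsidise, bound the total charge of every block below by $0$, and recover the remaining $+1$ from the two end columns, whose outer vertices have only two neighbours and hence demand proportionally more weight (this boundary effect is exactly the origin of the additive constant in $\lfloor 3n/2\rfloor+1$).

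The hard part will be the bookkeeping of this discharging: a single heavy column may be required to pay for light columns on both sides, and long runs of light columns (for instance two adjacent empty columns, which by (i) force both outer neighbours to have weight $\ge6$) must be handled so that no deficit is counted twice. Making this rigorous amounts to a finite case analysis over the possible patterns of two or three consecutive columns, i.e.\ over the transitions of the column-state transfer digraph underlying the dynamic program of Section~3; equivalently, the inequality $2C\ge 3n+1$ is the assertion that the minimum mean weight of a cycle in that digraph equals exactly $3/2$, together with the boundary correction at the two ends. I expect that verifying this minimum-mean-cycle value, whether by hand through the block analysis above or mechanically from the finite automaton of column states, is the genuine obstacle, and is presumably why the equality is stated here only as a conjecture.
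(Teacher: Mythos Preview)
The paper does not prove this statement: it is stated as a conjecture, and the remark that follows says only that the algorithm of Section~3 returns the value $\lfloor 3n/2\rfloor+1$ for every individual input $n$, while explicitly conceding that this ``is not sufficient for us to assert'' the general equality. There is therefore no proof in the paper to compare your proposal against.

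Your proposal is likewise not a proof, and you say so yourself. The preliminary reductions (restricting $f$ to $\{0,1,2\}$, and reducing the target to the single parity-free inequality $2\sum_j c_j\ge 3n+1$) are correct, and your observation that no nonnegative combination of the closed-neighbourhood constraints can reach $3n/2$ because the LP relaxation already sits near $10n/7$ is a genuinely useful diagnosis of why the methods used for $G_{1,n}$ and $G_{2,n}$ cannot work here. But the heart of the matter---the block/discharging case analysis showing that the column charges $e_j=2c_j-3$ sum to at least~$1$---is only sketched, with observation~(ii) left as ``a short case check'' and the double-counting issues for adjacent light columns explicitly deferred. Your closing sentence, that carrying this out ``is presumably why the equality is stated here only as a conjecture,'' is exactly right and matches the paper's own position.

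Where you and the paper differ is in the suggested route forward. The paper's remark points to adapting the transfer-matrix\,/\,loss-function techniques of \cite{gprt11} and \cite{rt19}; you instead propose a direct discharging argument on columns (equivalently, as you note, a minimum-mean-cycle computation on the column-state digraph underlying Section~3). Either route is plausible, but in both cases the actual work---the finite case analysis or the automaton computation---remains to be done.
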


\smallskip

We give some comment on the exact value of $\gamma_{\{2\}}(G_{3,n})$.
\begin{rem}
In fact, by an algorithm we propose in the next section, the exact value of $\gamma_{\{2\}}(G_{3,n})$ is $\left\lfloor\frac{3n}{2}\right\rfloor+1$ for any input positive integer $n$. However, it is not sufficient for us to assert that $\gamma_{\{2\}}(G_{3,n})=\left\lfloor\frac{3n}{2}\right\rfloor+1$. As a future work, we suggest that the methods proposed in~\cite{gprt11} and~\cite{rt19} might be modified and applied on this problem.
\end{rem}

\medskip

\section{An algorithm for determining $\gamma_{\{2\}}(G_{m,n})$}

In this section, an algorithm will be given to count the value of $\gamma_{\{2\}}(G_{m,n})$ for arbitrary positive integers $m$ and $n$. First of all, we introduce a method of labeling on the vertices of a grid $G_{m,n}$ with vertex set $$V_{m,n}=\{v_{i,j}~\mid~ 1\leq i\leq m, 1\leq j\leq n\},$$
in which $v_{i_1,j_1}v_{i_2,j_2}$ is an edge if and only if $|i_1-i_2|=1$ and $j_1=j_2$ or $i_1=i_2$ and $|j_1-j_2|=1.$ Let $f:V_{m,n}\rightarrow \{0,1,2\}$ be a function, and each vertex $v\in V_{m,n}$ be labeled with
$$\left\{
\begin{array}{ll}
\alpha_2, & \text{if}\quad f(v)=2;
\\
\alpha_{11}, & \text{if}\quad f(v)=1~\text{and}~f(N(v))\geq 1;
\\
\alpha_{10}, & \text{if}\quad f(v)=1~\text{and}~f(N(v))=0;
\\
\alpha_{02}, & \text{if}\quad f(v)=0~\text{and}~f(N(v))\geq 2;
\\
\alpha_{01}, & \text{if}\quad f(v)=0~\text{and}~f(N(v))=1;
\\
\alpha_{00}, & \text{if}\quad f(v)=0~\text{and}~f(N(v))=0.
\end{array}
\right.$$

We may realize the labeling $\alpha_{i,j}$ on a vertex $v$ by placing $i$ stones on $v$ and (exactly or at least) $j$ stones on the neighbors of $v$. Recall that for a function $f:V_{m,n}\rightarrow \{0,1,2\}$ we denote each vertex $v\in V_{m,n}$ by a circle filled with white, slashes, and black if $f(v)=0,$ $1$, and $2$, respectively. In the example given in Figure~\ref{fig_notation}, we define a function $f:V_{3,2}\rightarrow \{0,1,2\}$ by
$$f(v_{i,j})=\begin{cases}
    2, \quad\text{if}~(i,j)=(3,1);\\
    1, \quad\text{if}~(i,j)=(1,2);\\
    0, \quad\text{otherwise.}
\end{cases}$$
Then, its labeling corresponding to $f$ is shown in Figure~\ref{fig_labeling}.

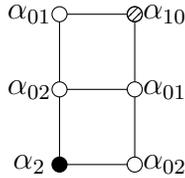
\begin{figure}[h]
\begin{center}
\begin{tikzpicture}
\draw (0,0)--(0,2);
\draw (1,0)--(1,2);
\draw (0,0)--(1,0);
\draw (0,1)--(1,1);
\draw (0,2)--(1,2);
\draw[fill] (0,0)coordinate(A00) circle(0.1cm);
\draw (-0.4,0) node {$\alpha_2$};
\draw[fill=white] (1,0)coordinate(A10) circle(0.1cm);
\draw (1.4,0) node {$\alpha_{02}$};
\draw[fill=white] (0,1)coordinate(A01) circle(0.1cm);
\draw (-0.4,1) node {$\alpha_{02}$};
\draw[fill=white] (1,1)coordinate(A11) circle(0.1cm);
\draw (1.4,1) node {$\alpha_{01}$};
\draw[fill=white] (0,2)coordinate(A02) circle(0.1cm);
\draw (-0.4,2) node {$\alpha_{01}$};
\draw[fill=white] (1,2) circle(0.1cm);
\draw (1.4,2) node {$\alpha_{10}$};
\draw[pattern=north east lines] (1,2)coordinate(A12) circle(0.1cm);
\end{tikzpicture}

\caption{An example of labeling on a $G_{3,2}$ equipped with a function $f.$}
\label{fig_labeling}

\end{center}
\end{figure}

By considering the mapped values of each vertex and its neighbors, the following labeling of pairs and triples of consecutive entries are prohibited:
\begin{enumerate}
\item[(i)] A pair of consecutive vertices is labeled by $(\alpha_2,\alpha_{10}),$ $(\alpha_2,\alpha_{01}),$ $(\alpha_2,\alpha_{00}),$ $(\alpha_{11},\alpha_{10}),$ $(\alpha_{11},\alpha_{00}),$ $(\alpha_{10},\alpha_2),$ $(\alpha_{10},\alpha_{11}),$ $(\alpha_{10},\alpha_{10}),$ $(\alpha_{10},\alpha_{00}),$ $(\alpha_{01},\alpha_2),$ $(\alpha_{00},\alpha_2),$ $(\alpha_{00},\alpha_{11}),$ or $(\alpha_{00},\alpha_{10}).$
\item[(ii)] A triple of consecutive vertices is labeled by $(\alpha_{11},\alpha_{01},\alpha_{11}),$ $(\alpha_{11},\alpha_{01},\alpha_{10}),$ $(\alpha_{10},\alpha_{01},\alpha_{11}),$ or $(\alpha_{10},\alpha_{01},\alpha_{10}).$
\end{enumerate}
It is straightforward to see their prohibition. For example, if a vertex $v$ is labeled by $\alpha_2$, then $f(v)=2$ and every vertex $u$ adjacent to $v$ satisfies $f(N(u))\geq 2,$ where $f$ is the corresponding function. Consequently, the labeling of two consecutive vertices will never be $(\alpha_2,\alpha_{10})$, $(\alpha_2,\alpha_{01}),$ or $(\alpha_2,\alpha_{00})$.

\smallskip

We say that a column vector with entries labeled with $\alpha_{2},$ $\alpha_{11},$ $\alpha_{10},$ $\alpha_{02},$ $\alpha_{01},$ $\alpha_{00}$ is {\it feasible,} if all the prohibited patterns do not appear. However, we emphasize that a column vector must be labeled after considering its neighborhood. For example, the grid $G_{3,4}$ in Figure~\ref{fig_u3_example_thmG341_case1} equipped with a function $f:V_{3,4}\rightarrow \{0,1,2\}$ as shown by three different patterns on each vertex. Then, the labeling $(\alpha_{10},\alpha_{02},\alpha_{00})^T$ on the last (rightmost) column is feasible since it does not contain any prohibited pattern. Let $T(m)$ be the set of feasible column vectors of length $m.$ By programming, we have $|T(1)|=6,$ $|T(2)|=23,$ $|T(3)|=95,$ $|T(4)|=389,$ $|T(5)|=1595,$ $|T(6)|=6538,$ and $|T(7)|=26802.$ Currently, we do not see a recurrence relation.

Next, assume that $m$ is fixed. We introduce a class ${\cal{F}}_{n}$ of functions in which each function almost forms an integer $\{2\}$-dominating function on $G_{m,n}$ if we neglect the last column.
\begin{defn}     \label{defi_F_n}
Given $m$, let ${\cal{F}}_{n}$ be the set consisting of functions $f:V_{m,n}\rightarrow\{0,1,2\}$ such that $\sum_{u\in N[v]} f(u)\geq2$ for all vertices $v$ not in the last column of $G_{m,n}.$ That is, every vertex not in the last column of $G_{m,n}$ will be labeled with $\alpha_2, \alpha_{11},$ or $\alpha_{02}$ if $G_{m,n}$ is equipped with any $f\in{\cal{F}}_{n}.$ 
\end{defn}

We do not restrict the labeling of vertices on the last column. For example, if $m=3,$ $n=4,$ and the function $f:V_{m,n}\rightarrow\{0,1,2\}$ given in Figure \ref{fig_u3_example_thmG341_case1} is in ${\cal{F}}_{4}$ that may have labels $\alpha_{10},$ $\alpha_{01},$ or $\alpha_{00}$ in the last column.

\begin{figure}[h]
\begin{center}
\begin{tikzpicture}

\draw (0,0)--(3,0);
\draw (0,1)--(3,1);
\draw (0,2)--(3,2);

\draw (0,0)--(0,2);
\draw (1,0)--(1,2);
\draw (2,0)--(2,2);
\draw (3,0)--(3,2);

\draw (1.38,0.15) node {$\alpha_{2}$};
\draw (1.38,1.15) node {$\alpha_{2}$};
\draw (1.38,2.15) node {$\alpha_{2}$};
\draw (0.38,2.15) node {$\alpha_{02}$};
\draw (0.38,1.15) node {$\alpha_{11}$};
\draw (0.38,0.15) node {$\alpha_{02}$};
\draw (2.38,2.15) node {$\alpha_{02}$};
\draw (2.38,1.15) node {$\alpha_{11}$};
\draw (2.38,0.15) node {$\alpha_{02}$};
\draw (3.38,2.15) node {$\alpha_{10}$};
\draw (3.38,1.15) node {$\alpha_{02}$};
\draw (3.38,0.15) node {$\alpha_{00}$};

\draw[fill] (1,1)coordinate(A11) circle(0.1cm);
\draw[fill=white] (3,1)coordinate(A33) circle(0.1cm);
\draw[fill=white] (0,0)coordinate(A00) circle(0.1cm);
\draw[fill=white] (0,1)coordinate(A01) circle(0.1cm);
\draw[fill=white] (0,2)coordinate(A02) circle(0.1cm);
\draw[fill] (1,2)coordinate(A12) circle(0.1cm);
\draw[fill=white] (2,2)coordinate(A22) circle(0.1cm);
\draw[fill=white] (3,2)coordinate(A32) circle(0.1cm);
\draw[pattern=north east lines] (3,2)coordinate(A32) circle(0.1cm);
\draw[fill=white] (2,1)coordinate(A21) circle(0.1cm);
\draw[pattern=north east lines] (2,1)coordinate(A21) circle(0.1cm);
\draw[fill] (1,0)coordinate(A10) circle(0.1cm);
\draw[fill=white] (2,0)coordinate(A20) circle(0.1cm);
\draw[fill=white] (3,0)coordinate(A30) circle(0.1cm);
\draw[pattern=north east lines] (0,1)coordinate(A01) circle(0.1cm);
\end{tikzpicture}

\caption{A function $f\in{\cal{F}}_{4}$ and its corresponding labeling.} 
\label{fig_u3_example_thmG341_case1}

\end{center}
\end{figure}
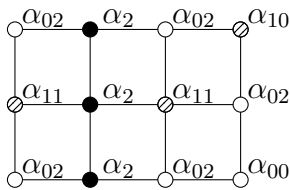

Before depicting our algorithm, we define {\it weights} in a natural way.
\begin{defn}  \label{defn_FnX}
Assume that $m$ is a positive integer, and $X\in T(m)$ is a feasible labeled column vector.
Let ${\cal{F}}_{n}(X)$ be the set of the functions in ${\cal{F}}_{n}$ that having the last column labeled by $X.$ Suppose $f:V_{m,n}\rightarrow\{0,1,2\}$ is a function in ${\cal{F}}_{n}(X)$. Then,
The {\it weight} of a function $f:V_{m,n}\rightarrow\{0,1,2\}$ is defined by
$$|f|=\sum_{i=1}^m\sum_{j=1}^n f(v_{i,j}),$$
the {\it weight} of a labeled column vector $X$ is defined by
$$|X|=\sum_{i=1}^m s_i,\quad \text{where}~s_i=\left\{\begin{array}{ll}
2 & \text{if}~X_j(k)=\alpha_2;
\\
1 & \text{if}~X_j(k)\in\{\alpha_{11},\alpha_{10}\};
\\
0 & \text{if}~X_j(k)\in\{\alpha_{02},\alpha_{01},\alpha_{00}\};
\end{array}\right. $$
and the {\it weight} of a grid $G_{m,n}$ equipped with functions in ${\cal{F}}_{n}(X)$ is defined by
$${{\boldsymbol{w}}_{n}}(X)=\min \{|f|:f\in{\cal{F}}_{n}(X)\}.$$
\end{defn}

To show that Definition~\ref{defn_FnX} is well-defined, we give the following proposition.
\begin{prop}     \label{prop_FnX}
Every $X\in T(m)$ can be the labeling on the last column of $G_{m,n}$ corresponding to some $f\in{\cal{F}}_{n}(X)$, if $n\geq 3.$
That is, ${\cal{F}}_{n}(X)$ is not empty.
\end{prop}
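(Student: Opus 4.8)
The plan is to construct, for each feasible $X\in T(m)$, an explicit function $f\in\mathcal{F}_n(X)$, thereby witnessing that $\mathcal{F}_n(X)\neq\emptyset$. Since $n\ge 3$, the grid has at least three columns, and I would use the leftmost columns purely as a \emph{buffer}: set $f(v_{i,j})=2$ for every vertex with $1\le j\le n-2$. This makes each such vertex integer $\{2\}$-dominated automatically (it already carries value $2$), and it also dominates every vertex $v_{i,n-1}$, since its left neighbor $v_{i,n-2}$ contributes $2$ to $f(N[v_{i,n-1}])$. Hence, no matter how I fill columns $n-1$ and $n$, every vertex outside the last column is integer $\{2\}$-dominated, which is exactly the requirement of Definition~\ref{defi_F_n}.

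It remains to set columns $n-1$ and $n$ so that the last column is labeled precisely by $X$. The values on column $n$ are forced by $X$ (namely $\alpha_2\mapsto 2$, $\alpha_{11},\alpha_{10}\mapsto 1$, and $\alpha_{02},\alpha_{01},\alpha_{00}\mapsto 0$). For each row $i$ write $L_i$ for the sum of the (one or two) vertical neighbors of $v_{i,n}$ inside column $n$; this is determined by $X$. Because column $n$ is the last, we have $f(N(v_{i,n}))=L_i+f(v_{i,n-1})$, so the only free parameter influencing the label of $v_{i,n}$ is the left-neighbor value $h_i:=f(v_{i,n-1})\in\{0,1,2\}$; moreover each $h_i$ may be chosen independently, since column $n-1$ is already dominated by the buffer and needs no label of its own. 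I would then pick $h_i$ row by row to match the second subscript of the $i$-th label of $X$: for $\alpha_2,\alpha_{11},\alpha_{02}$ the required inequalities $f(N(v_{i,n}))\ge 0,\ge 1,\ge 2$ are reached by taking $h_i$ large enough (at most $2$), whereas the rigid labels $\alpha_{10},\alpha_{01},\alpha_{00}$ demand an exact value ($0$, $1$, $0$ respectively).

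The main obstacle, and the only place feasibility of $X$ enters, is verifying that these exact targets are attainable with a single choice of $h_i$; here the prohibited pairs and triples do the work. If $v_{i,n}$ carries $\alpha_{10}$ or $\alpha_{00}$, the forbidden pairs $(\alpha_{10},\alpha_2),(\alpha_{10},\alpha_{11}),(\alpha_{10},\alpha_{10}),(\alpha_{10},\alpha_{00})$ and $(\alpha_{00},\alpha_2),(\alpha_{00},\alpha_{11}),(\alpha_{00},\alpha_{10})$ force both vertical neighbors to have value $0$, so $L_i=0$ and $h_i=0$ works. If $v_{i,n}$ carries $\alpha_{01}$, the forbidden pairs $(\alpha_2,\alpha_{01}),(\alpha_{01},\alpha_2)$ exclude a value-$2$ vertical neighbor and the forbidden triples $(\alpha_{11},\alpha_{01},\alpha_{11}),(\alpha_{11},\alpha_{01},\alpha_{10}),(\alpha_{10},\alpha_{01},\alpha_{11}),(\alpha_{10},\alpha_{01},\alpha_{10})$ exclude two value-$1$ vertical neighbors, whence $L_i\le 1$ and $h_i=1-L_i\in\{0,1\}$ attains the target $f(N(v_{i,n}))=1$. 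Once every $h_i$ is fixed this way, the pair $\bigl(f(v_{i,n}),\,f(N(v_{i,n}))\bigr)$ satisfies the defining condition of the label $X_i$, so the last column is labeled exactly by $X$; this yields the desired $f\in\mathcal{F}_n(X)$. I would close by noting that the same buffer construction applies verbatim to the boundary cases $m=1$ (no vertical neighbors, so $L_i=0$ for all $i$) and $n=3$ (the single buffer column $j=1$ suffices).
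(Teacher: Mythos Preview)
Your construction is exactly the one in the paper: fill columns $1$ through $n-2$ with the value $2$, set column $n$ according to the first subscripts of $X$, and choose the values $h_i$ in column $n-1$ so that the second subscripts of $X$ are realized. The paper's own proof simply asserts that this last step can be carried out and declares ``it is clear that $f\in\mathcal{F}_n(X)$''; you have supplied precisely the verification that is missing there, namely the case analysis showing that feasibility of $X$ (via the prohibited pairs and triples) guarantees $L_i\le 1$ when the label is $\alpha_{01}$ and $L_i=0$ when the label is $\alpha_{10}$ or $\alpha_{00}$, so that an admissible $h_i\in\{0,1,2\}$ always exists.
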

\begin{proof}
Let $f:V_{m,n}\rightarrow \{0,1,2\}$ be a function with $f(v_{i,j})=2$ if $1\leq j\leq n-2.$ For the last two columns, we first place number of stones on each vertex of the last column that fit the first coordinates of entries of $X$, then place the corresponding number of stones on each vertex of the penultimate column that fit the second coordinates of entries of $X$. It is clear that $f\in{\cal{F}}_{n}(X).$
\end{proof}

We give an example to Proposition~\ref{prop_FnX}.
\begin{exam}
Let $m=4,$ $n=4,$ and $X=(\alpha_{11},\alpha_{02},\alpha_{01},\alpha_{10})^T\in T(m).$ Then a function $f:V_{m,n}\rightarrow\{0,1,2\}$ described in Proposition~\ref{prop_FnX} is shown in Figure~\ref{fig_example_FnX}. Notice that if $f(v_{1,3})$ or $f(v_{2,3})$ is changed to $2$ then $f$ is still an example.
\end{exam}

\begin{figure}[h]
\begin{center}
\begin{tikzpicture}

\draw (0,0)--(3,0);
\draw (0,1)--(3,1);
\draw (0,2)--(3,2);
\draw (0,3)--(3,3);
\draw (0,0)--(0,3);
\draw (1,0)--(1,3);
\draw (2,0)--(2,3);
\draw (3,0)--(3,3);

\draw (3.4,3) node {$\alpha_{11}$};
\draw (3.4,2) node {$\alpha_{02}$};
\draw (3.4,1) node {$\alpha_{01}$};
\draw (3.4,0) node {$\alpha_{10}$};
\draw (2.4,2.6) node {$v_{1,3}$};
\draw (2.4,1.6) node {$v_{2,3}$};

\draw[fill=black] (0,0)coordinate(A00) circle(0.1cm);
\draw[fill=black] (0,1)coordinate(A01) circle(0.1cm);
\draw[fill=black] (0,2)coordinate(A02) circle(0.1cm);
\draw[fill=black] (0,3)coordinate(A03) circle(0.1cm);
\draw[fill=black] (1,0)coordinate(A10) circle(0.1cm);
\draw[fill=black] (1,1)coordinate(A11) circle(0.1cm);
\draw[fill=black] (1,2)coordinate(A12) circle(0.1cm);
\draw[fill=black] (1,3)coordinate(A13) circle(0.1cm);
\draw[fill=white] (2,0)coordinate(A20) circle(0.1cm);
\draw[fill=white] (2,1)coordinate(A21) circle(0.1cm);
\draw[fill=white] (2,2)coordinate(A22) circle(0.1cm);
\draw[pattern=north east lines] (2,2)coordinate(A22) circle(0.1cm);
\draw[fill=white] (2,3)coordinate(A23) circle(0.1cm);
\draw[pattern=north east lines] (2,3)coordinate(A23) circle(0.1cm);
\draw[fill=white] (3,0)coordinate(A30) circle(0.1cm);
\draw[pattern=north east lines] (3,0)coordinate(A30) circle(0.1cm);
\draw[fill=white] (3,1)coordinate(A31) circle(0.1cm);
\draw[fill=white] (3,2)coordinate(A32) circle(0.1cm);
\draw[fill=white] (3,3)coordinate(A33) circle(0.1cm);
\draw[pattern=north east lines] (3,3)coordinate(A33) circle(0.1cm);

\draw[fill=white] (3,1)coordinate(A33) circle(0.1cm);

\end{tikzpicture}

\caption{A function $f\in{\cal{F}}_{4}(X)$.} 
\label{fig_example_FnX}

\end{center}
\end{figure}
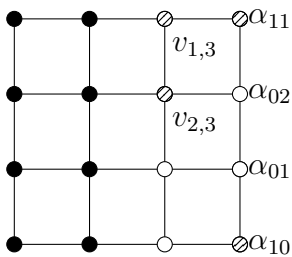

We then provide an algorithm to find $\gamma_{\{2\}}(G_{m,n})$. First of all, the relation between ${{\boldsymbol{w}}_{n}}(X)$ and $\gamma_{\{2\}}(G_{m,n})$ must be verified.
\begin{lem}
For given $m$, the integer $\{2\}$-domination number of $G_{m,n}$ is
$$\gamma_{\{2\}}(G_{m,n}) = \min\left\{{{\boldsymbol{w}}_{n}}(X)\mid X\in T^D(m)\right\}$$
where $T^D(m)\subseteq T(m)$ collects all feasible column vectors of length $m$ with labels in $\{\alpha_2,\alpha_{11},\alpha_{02}\}$ only (and without labels in $\{\alpha_{10},\alpha_{01},\alpha_{00}\}$).
\end{lem}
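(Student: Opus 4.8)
The plan is to show the claimed equality by proving two inequalities, exploiting the fact that the column vectors in $T^D(m)$ are exactly those in which every entry is one of the three ``good'' labels $\alpha_2,\alpha_{11},\alpha_{02}$, i.e.\ precisely the labels that certify a vertex is integer $\{2\}$-dominated. The central observation is that a function $f:V_{m,n}\to\{0,1,2\}$ is an integer $\{2\}$-dominating function of $G_{m,n}$ if and only if it lies in ${\cal{F}}_{n}$ and, in addition, every vertex of the last column is integer $\{2\}$-dominated. By Definition~\ref{defi_F_n}, membership in ${\cal{F}}_{n}$ already guarantees that every vertex outside the last column is dominated, so the only extra requirement is that each entry of the last column carries a label in $\{\alpha_2,\alpha_{11},\alpha_{02}\}$ --- equivalently, that the labeling $X$ of the last column lies in $T^D(m)$.

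First I would prove $\gamma_{\{2\}}(G_{m,n})\ge \min\{{{\boldsymbol{w}}_{n}}(X)\mid X\in T^D(m)\}$. Take a minimal integer $\{2\}$-dominating function $f$, so $|f|=\gamma_{\{2\}}(G_{m,n})$. Since $f$ dominates every vertex, in particular it dominates every vertex outside the last column, hence $f\in{\cal{F}}_{n}$; and it dominates every vertex of the last column, so the induced last-column labeling $X$ uses only labels in $\{\alpha_2,\alpha_{11},\alpha_{02}\}$, giving $X\in T^D(m)$. Then $f\in{\cal{F}}_{n}(X)$, so by the definition of ${{\boldsymbol{w}}_{n}}(X)$ as the minimum weight over ${\cal{F}}_{n}(X)$ we get ${{\boldsymbol{w}}_{n}}(X)\le |f|=\gamma_{\{2\}}(G_{m,n})$, and a fortiori the minimum of ${{\boldsymbol{w}}_{n}}$ over $T^D(m)$ is at most $\gamma_{\{2\}}(G_{m,n})$.

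Next I would prove the reverse inequality $\gamma_{\{2\}}(G_{m,n})\le \min\{{{\boldsymbol{w}}_{n}}(X)\mid X\in T^D(m)\}$. Let $X\in T^D(m)$ attain the minimum and let $f\in{\cal{F}}_{n}(X)$ realize the weight, so $|f|={{\boldsymbol{w}}_{n}}(X)$. Because $f\in{\cal{F}}_{n}$, every vertex not in the last column is integer $\{2\}$-dominated; because the last column is labeled by $X\in T^D(m)$, every entry there is $\alpha_2$, $\alpha_{11}$, or $\alpha_{02}$, and in each of these three cases the defining condition for the label yields $f(N[v])\ge 2$, so every last-column vertex is dominated as well. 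Hence $f$ is a genuine integer $\{2\}$-dominating function of $G_{m,n}$, whence $\gamma_{\{2\}}(G_{m,n})\le |f|={{\boldsymbol{w}}_{n}}(X)=\min\{{{\boldsymbol{w}}_{n}}(X)\mid X\in T^D(m)\}$.

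Combining the two inequalities gives the equality. I expect the main subtlety --- rather than a genuine obstacle --- to be a careful bookkeeping check that the label descriptions in the table translate exactly into the domination condition: one must confirm that $\alpha_2,\alpha_{11},\alpha_{02}$ are \emph{precisely} the labels $v$ can carry when $f(N[v])\ge 2$, and that the three remaining labels $\alpha_{10},\alpha_{01},\alpha_{00}$ each force $f(N[v])\le 1$, so that requiring last-column labels to lie in $T^D(m)$ is exactly equivalent to dominating the last column. One should also verify implicitly that ${{\boldsymbol{w}}_{n}}(X)$ is well-defined for the relevant $X$, which is covered by Proposition~\ref{prop_FnX} for $n\ge 3$; for small $n$ one checks directly that the relevant sets ${\cal{F}}_{n}(X)$ with $X\in T^D(m)$ are nonempty, since an optimal $f$ exists and furnishes such an $X$.
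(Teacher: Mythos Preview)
Your proposal is correct and follows essentially the same approach as the paper: both prove the equality via the two inequalities, observing that functions in ${\cal{F}}_{n}(X)$ with $X\in T^D(m)$ are exactly the integer $\{2\}$-dominating functions of $G_{m,n}$, and that a minimal such function furnishes an $X\in T^D(m)$ witnessing the reverse inequality. Your write-up is somewhat more detailed in its bookkeeping (explicitly checking the label--domination correspondence and the well-definedness of ${{\boldsymbol{w}}_{n}}(X)$), but the argument is the same.
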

\begin{proof}
If $f\in{\cal{F}}_{n}(X)$ for some labeled column vector $X\in T^D(m),$ then $X$ only has labels in $\{\alpha_2,\alpha_{11},\alpha_{02}\}$ so that $f$ is an integer $\{2\}$-dominating function on $G_{m,n}.$ Thus, $\gamma_{\{2\}}(G_{m,n})\leq\min\left\{{{\boldsymbol{w}}_{n}}(X)\mid X\in T^D(m)\right\}.$
On the other hand, if $g$ is a minimal integer $\{2\}$-dominating function on $G_{m,n}$, then the labeling on the last column of $G_{m,n}$ corresponding to $g$ must be a feasible column vector with labels in $\{\alpha_2,\alpha_{11},\alpha_{02}\}$ only. Hence,
$$\gamma_{\{2\}}(G_{m,n})=|g|\geq\min\left\{{{\boldsymbol{w}}_{n}}(X)\mid X\in T^D(m)\right\}.$$
The result follows.
\end{proof}

By the above Lemma, we shall give an algorithm to find $\min\left\{{{\boldsymbol{w}}_{n}}(X)\mid X\in T^D(m)\right\}$, or equivalently, $\gamma_{\{2\}}(G_{m,n})$, in the following. Furthermore, if $m$ is prescribed, then the time complexity of this algorithm is linear in $n$.

\medskip

\noindent{{\bf Algorithm for determining $\gamma_{\{2\}}(G_{m,n})$ for fixed $m$}}
\begin{enumerate}
\item List the feasible column vectors as $X_1, X_2, \ldots, X_\ell$, where $\ell=|T(m)|.$
\item Construct a truth table $M$ of size $\ell$-by-$\ell$ in which $M(i,j)$ is True if and only if there is a function
$$f:V_{m,n-1}\rightarrow\{0,1,2\}\quad\text{with}\quad f\in{\cal{F}}_{n-1}(X_i)$$
such that the extended function
$$\tilde{f}:V_{m,n}\rightarrow\{0,1,2\}$$
obtained from $f$ by concatenating a new column
$$\tilde{f}(v_{k,n})=\left\{\begin{array}{ll}
2 & \text{if}~X_j(k)=\alpha_2;
\\
1 & \text{if}~X_j(k)\in\{\alpha_{11},\alpha_{10}\};
\\
0 & \text{if}~X_j(k)\in\{\alpha_{02},\alpha_{01},\alpha_{00}\};
\end{array}\right.
\quad\quad k=1,2,\ldots,m$$
satisfies $\tilde{f}\in {\cal{F}}_{n}(X_j).$
\item Initialize an array $w$ of length $\ell$ such that
    $$w(i) = {{\boldsymbol{w}}_{0}}(X_i) := \left\{\begin{array}{ll}
0 & \text{if}~X_i=(\alpha_{02},\alpha_{02},\ldots,\alpha_{02})^T;
\\
\infty & \text{otherwise}.
\end{array}\right.$$
\item For $k$ from $1$ to $n$ do:

    \quad Replace the value of $w(j)$ by
    $${{\boldsymbol{w}}_{k}}(X_j) = \min\{{{\boldsymbol{w}}_{k-1}}(X_i)\mid M(i,j)~\text{is True}\}+|X_j|$$
    \quad for all $1\leq j\leq \ell,$ where $|X_j|$ is defined in Definition~\ref{defn_FnX}.
\item Return
    $$\min\{w(i)\mid X_i\in T^D(m)\}=\min\left\{{{\boldsymbol{w}}_{n}}(X_i)\mid X_i\in T^D(m)\right\}=\gamma_{\{2\}}(G_{m,n}).$$
\end{enumerate}

We give more details on the truth table $M$. For example, let $m=4,$ $n\geq 3,$ and $X_j=(\alpha_{11},\alpha_{02},\alpha_{01},\alpha_{10})^T$. Recall that $M(i,j)$ is True if and only if there is a function $f\in{\cal{F}}_{n-1}(X_i)$ such that the last column is labeled by $X_i$, and for the function $\tilde{f}$ obtained from $f$ by concatenating a new column with number of stones corresponding to $X_j$, then the new column will be labeled by $X_j.$ Particularly, the labeling $X_i$ is not affected by the labeling $X_j$, while the labeling $X_j$ is affected by $X_i.$ Please refer to Figure~\ref{fig_XiXj}, so we realize that $M(i,j)$ is True if $X_i=(\alpha_{11},\alpha_{11},\alpha_{02},\alpha_{02})^T$, $(\alpha_{11},\alpha_{2},\alpha_{02},\alpha_{01})^T$, or $(\alpha_{2},\alpha_{11},\alpha_{02},\alpha_{01})^T.$

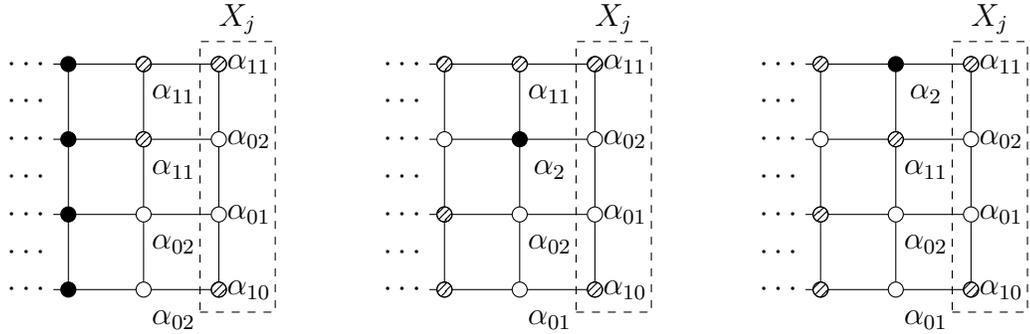
\begin{figure}[h]
\begin{center}
\begin{tikzpicture}

\draw (0.8,0)--(3,0);
\draw (0.8,1)--(3,1);
\draw (0.8,2)--(3,2);
\draw (0.8,3)--(3,3);
\draw (1,0)--(1,3);
\draw (2,0)--(2,3);
\draw (3,0)--(3,3);
\draw (0.47,0) node{$\cdots$};
\draw (0.47,0.5) node{$\cdots$};
\draw (0.47,1) node{$\cdots$};
\draw (0.47,1.5) node{$\cdots$};
\draw (0.47,2) node{$\cdots$};
\draw (0.47,2.5) node{$\cdots$};
\draw (0.47,3) node{$\cdots$};

\draw (3.4,3) node {$\alpha_{11}$};
\draw (3.4,2) node {$\alpha_{02}$};
\draw (3.4,1) node {$\alpha_{01}$};
\draw (3.4,0) node {$\alpha_{10}$};
\draw (2.4,2.6) node {$\alpha_{11}$};
\draw (2.4,1.6) node {$\alpha_{2}$};
\draw (2.4,0.6) node {$\alpha_{02}$};
\draw (2.4,-0.4) node {$\alpha_{01}$};

\draw[fill=white] (1,0)coordinate(A10) circle(0.1cm);
\draw[pattern=north east lines] (1,0)coordinate(A10) circle(0.1cm);
\draw[fill=white] (1,1)coordinate(A11) circle(0.1cm);
\draw[pattern=north east lines] (1,1)coordinate(A11) circle(0.1cm);
\draw[fill=white] (1,2)coordinate(A12) circle(0.1cm);
\draw[fill=white] (1,3)coordinate(A13) circle(0.1cm);
\draw[pattern=north east lines] (1,3)coordinate(A13) circle(0.1cm);
\draw[fill=white] (2,0)coordinate(A20) circle(0.1cm);
\draw[fill=white] (2,1)coordinate(A21) circle(0.1cm);
\draw[fill=black] (2,2)coordinate(A22) circle(0.1cm);
\draw[fill=white] (2,3)coordinate(A23) circle(0.1cm);
\draw[pattern=north east lines] (2,3)coordinate(A23) circle(0.1cm);
\draw[fill=white] (3,0)coordinate(A30) circle(0.1cm);
\draw[pattern=north east lines] (3,0)coordinate(A30) circle(0.1cm);
\draw[fill=white] (3,1)coordinate(A31) circle(0.1cm);
\draw[fill=white] (3,2)coordinate(A32) circle(0.1cm);
\draw[fill=white] (3,3)coordinate(A33) circle(0.1cm);
\draw[pattern=north east lines] (3,3)coordinate(A33) circle(0.1cm);


\draw (5.8,0)--(8,0);
\draw (5.8,1)--(8,1);
\draw (5.8,2)--(8,2);
\draw (5.8,3)--(8,3);
\draw (6,0)--(6,3);
\draw (7,0)--(7,3);
\draw (8,0)--(8,3);
\draw (5.47,0) node{$\cdots$};
\draw (5.47,0.5) node{$\cdots$};
\draw (5.47,1) node{$\cdots$};
\draw (5.47,1.5) node{$\cdots$};
\draw (5.47,2) node{$\cdots$};
\draw (5.47,2.5) node{$\cdots$};
\draw (5.47,3) node{$\cdots$};

\draw (8.4,3) node {$\alpha_{11}$};
\draw (8.4,2) node {$\alpha_{02}$};
\draw (8.4,1) node {$\alpha_{01}$};
\draw (8.4,0) node {$\alpha_{10}$};
\draw (7.4,2.6) node {$\alpha_{2}$};
\draw (7.4,1.6) node {$\alpha_{11}$};
\draw (7.4,0.6) node {$\alpha_{02}$};
\draw (7.4,-0.4) node {$\alpha_{01}$};

\draw[fill=white] (6,0)coordinate(A60) circle(0.1cm);
\draw[pattern=north east lines] (6,0)coordinate(A60) circle(0.1cm);
\draw[fill=white] (6,1)coordinate(A61) circle(0.1cm);
\draw[pattern=north east lines] (6,1)coordinate(A61) circle(0.1cm);
\draw[fill=white] (6,2)coordinate(A62) circle(0.1cm);
\draw[fill=white] (6,3)coordinate(A63) circle(0.1cm);
\draw[pattern=north east lines] (6,3)coordinate(A63) circle(0.1cm);
\draw[fill=white] (7,0)coordinate(A70) circle(0.1cm);
\draw[fill=white] (7,1)coordinate(A71) circle(0.1cm);
\draw[fill=white] (7,2)coordinate(A72) circle(0.1cm);
\draw[pattern=north east lines] (7,2)coordinate(A72) circle(0.1cm);
\draw[fill=black] (7,3)coordinate(A73) circle(0.1cm);
\draw[fill=white] (8,0)coordinate(A80) circle(0.1cm);
\draw[pattern=north east lines] (8,0)coordinate(A80) circle(0.1cm);
\draw[fill=white] (8,1)coordinate(A81) circle(0.1cm);
\draw[fill=white] (8,2)coordinate(A82) circle(0.1cm);
\draw[fill=white] (8,3)coordinate(A83) circle(0.1cm);
\draw[pattern=north east lines] (8,3)coordinate(A83) circle(0.1cm);


\draw (-4.2,0)--(-2,0);
\draw (-4.2,1)--(-2,1);
\draw (-4.2,2)--(-2,2);
\draw (-4.2,3)--(-2,3);
\draw (-4,0)--(-4,3);
\draw (-3,0)--(-3,3);
\draw (-2,0)--(-2,3);
\draw (-4.53,0) node{$\cdots$};
\draw (-4.53,0.5) node{$\cdots$};
\draw (-4.53,1) node{$\cdots$};
\draw (-4.53,1.5) node{$\cdots$};
\draw (-4.53,2) node{$\cdots$};
\draw (-4.53,2.5) node{$\cdots$};
\draw (-4.53,3) node{$\cdots$};

\draw (-1.6,3) node {$\alpha_{11}$};
\draw (-1.6,2) node {$\alpha_{02}$};
\draw (-1.6,1) node {$\alpha_{01}$};
\draw (-1.6,0) node {$\alpha_{10}$};
\draw (-2.6,2.6) node {$\alpha_{11}$};
\draw (-2.6,1.6) node {$\alpha_{11}$};
\draw (-2.6,0.6) node {$\alpha_{02}$};
\draw (-2.6,-0.4) node {$\alpha_{02}$};

\draw[fill=black] (-4,0)coordinate(A-40) circle(0.1cm);
\draw[fill=black] (-4,1)coordinate(A-41) circle(0.1cm);
\draw[fill=black] (-4,2)coordinate(A-42) circle(0.1cm);
\draw[fill=black] (-4,3)coordinate(A-43) circle(0.1cm);
\draw[fill=white] (-3,0)coordinate(A-30) circle(0.1cm);
\draw[fill=white] (-3,1)coordinate(A-31) circle(0.1cm);
\draw[fill=white] (-3,2)coordinate(A-32) circle(0.1cm);
\draw[pattern=north east lines] (-3,2)coordinate(A-32) circle(0.1cm);
\draw[fill=white] (-3,3)coordinate(A-33) circle(0.1cm);
\draw[pattern=north east lines] (-3,3)coordinate(A-33) circle(0.1cm);
\draw[fill=white] (-2,0)coordinate(A-20) circle(0.1cm);
\draw[pattern=north east lines] (-2,0)coordinate(A-20) circle(0.1cm);
\draw[fill=white] (-2,1)coordinate(A-21) circle(0.1cm);
\draw[fill=white] (-2,2)coordinate(A-22) circle(0.1cm);
\draw[fill=white] (-2,3)coordinate(A-23) circle(0.1cm);
\draw[pattern=north east lines] (-2,3)coordinate(A-23) circle(0.1cm);


\draw[dashed] (2.75,-0.3) rectangle (3.75,3.3);
\draw (3.25,3.6) node {$X_j$};
\draw[dashed] (7.75,-0.3) rectangle (8.75,3.3);
\draw (8.25,3.6) node {$X_j$};
\draw[dashed] (-2.25,-0.3) rectangle (-1.25,3.3);
\draw (-1.75,3.6) node {$X_j$};

\end{tikzpicture}

\caption{Some cases of $f\in{\cal{F}}_{n-1}(X_i)$ and $\tilde{f}\in{\cal{F}}_{n}(X_j)$, where $X_j$ is given.}
\label{fig_XiXj}

\end{center}
\end{figure}

However, in the same example above, we again emphasize that the labeling $X_j$ is affected by the labeling $X_i$. In Figure~\ref{fig_notXiXj} we see that if $X_i=(\alpha_{02},\alpha_{2},\alpha_{02},\alpha_{11})^T$ and $X_j=(\alpha_{11},\alpha_{02},\alpha_{01},\alpha_{10})^T$, then the top vertex and the bottom vertex of $X_j$ must be labeled by $\alpha_{10}$ and $\alpha_{11}$ instead of $\alpha_{11}$ and $\alpha_{10}$, respectively. Therefore, $M(i,j)$ is False.

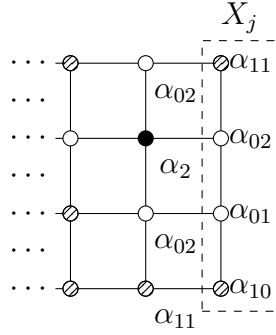
\begin{figure}[h]
\begin{center}
\begin{tikzpicture}

\draw (0.8,0)--(3,0);
\draw (0.8,1)--(3,1);
\draw (0.8,2)--(3,2);
\draw (0.8,3)--(3,3);
\draw (1,0)--(1,3);
\draw (2,0)--(2,3);
\draw (3,0)--(3,3);
\draw (0.47,0) node{$\cdots$};
\draw (0.47,0.5) node{$\cdots$};
\draw (0.47,1) node{$\cdots$};
\draw (0.47,1.5) node{$\cdots$};
\draw (0.47,2) node{$\cdots$};
\draw (0.47,2.5) node{$\cdots$};
\draw (0.47,3) node{$\cdots$};

\draw (3.4,3) node {$\alpha_{11}$};
\draw (3.4,2) node {$\alpha_{02}$};
\draw (3.4,1) node {$\alpha_{01}$};
\draw (3.4,0) node {$\alpha_{10}$};
\draw (2.4,2.6) node {$\alpha_{02}$};
\draw (2.4,1.6) node {$\alpha_{2}$};
\draw (2.4,0.6) node {$\alpha_{02}$};
\draw (2.4,-0.4) node {$\alpha_{11}$};

\draw[fill=white] (1,0)coordinate(A10) circle(0.1cm);
\draw[pattern=north east lines] (1,0)coordinate(A10) circle(0.1cm);
\draw[fill=white] (1,1)coordinate(A11) circle(0.1cm);
\draw[pattern=north east lines] (1,1)coordinate(A11) circle(0.1cm);
\draw[fill=white] (1,2)coordinate(A12) circle(0.1cm);
\draw[fill=white] (1,3)coordinate(A13) circle(0.1cm);
\draw[pattern=north east lines] (1,3)coordinate(A13) circle(0.1cm);
\draw[fill=white] (2,0)coordinate(A20) circle(0.1cm);
\draw[pattern=north east lines] (2,0)coordinate(A20) circle(0.1cm);
\draw[fill=white] (2,1)coordinate(A21) circle(0.1cm);
\draw[fill=black] (2,2)coordinate(A22) circle(0.1cm);
\draw[fill=white] (2,3)coordinate(A23) circle(0.1cm);
\draw[fill=white] (3,0)coordinate(A30) circle(0.1cm);
\draw[pattern=north east lines] (3,0)coordinate(A30) circle(0.1cm);
\draw[fill=white] (3,1)coordinate(A31) circle(0.1cm);
\draw[fill=white] (3,2)coordinate(A32) circle(0.1cm);
\draw[fill=white] (3,3)coordinate(A33) circle(0.1cm);
\draw[pattern=north east lines] (3,3)coordinate(A33) circle(0.1cm);

\draw[dashed] (2.75,-0.3) rectangle (3.75,3.3);
\draw (3.25,3.6) node {$X_j$};

\end{tikzpicture}

\caption{Not a case of $f\in{\cal{F}}_{n-1}(X_i)$ and $\tilde{f}\in{\cal{F}}_{n}(X_j)$, where $X_j$ is given.}
\label{fig_notXiXj}

\end{center}
\end{figure}

\medskip

As an application of the proposed algorithm, we obtain the values of $\gamma_{\{2\}}(G_{4,n})$ for $n\leq 100$ listed as follows.
\begin{thm}     \label{thm_Gridmn_4}
For positive integer $n\leq 100$, we have
$$\gamma_{\{2\}}(G_{4,n})=\begin{cases}
    2n+2, \quad \text{if}~n=1;\\
    2n+1, \quad \text{if}~n\in\{2,3\}\cup\ \{5,6\};\\
    2n, \quad \text{if}~n\in\{4\}\cup\{7,\dots,14\}\cup\{16,17\};\\
    2n-1, \quad \text{if}~n\in\{15\}\cup\{18,\dots,25\}\cup\{27,28\};\\
    2n-2, \quad \text{if}~n\in\{26\}\cup\{29,\dots,36\}\cup\{38\};\\
    2n-3, \quad \text{if}~n\in\{37\}\cup\{39,\dots,47\};\\
    2n-4, \quad \text{if}~n\in\{48,\dots,58\};\\
    2n-5, \quad \text{if}~n\in\{59,\dots,68\};\\
    2n-6, \quad \text{if}~n\in\{69,\dots,78\};\\
    2n-7, \quad \text{if}~n\in\{79,\dots,88\};\\
    2n-8, \quad \text{if}~n\in\{89,\dots,98\};\\
    2n-9, \quad \text{if}~n\in\{99,100\}.\\
\end{cases}$$
\end{thm}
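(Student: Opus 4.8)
The plan is to invoke directly the algorithm of Section~3 with the parameter $m=4$. By the Lemma identifying $\gamma_{\{2\}}(G_{m,n})$ with $\min\{\boldsymbol{w}_n(X_i)\mid X_i\in T^D(m)\}$, the values that the algorithm returns are exactly $\gamma_{\{2\}}(G_{4,n})$, so it suffices to run the recurrence for $1\le n\le 100$ and read off the output, then confirm it matches the stated piecewise formula. Thus the statement is an assertion about the outcome of a finite computation whose validity is already guaranteed by the results established before the algorithm.

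First I would enumerate the feasible column vectors of length $4$, obtaining the list $X_1,\dots,X_\ell$ with $\ell=|T(4)|=389$, and single out the subset $T^D(4)\subseteq T(4)$ consisting of those whose entries lie only in $\{\alpha_2,\alpha_{11},\alpha_{02}\}$. Next I would build the $389\times389$ truth table $M$, where $M(i,j)$ records whether a function in $\mathcal{F}_{n-1}(X_i)$ can be extended by a column of stones prescribed by $X_j$ so that the newly appended column actually receives the labeling $X_j$. Here one must respect the asymmetry illustrated in Figures~\ref{fig_XiXj} and~\ref{fig_notXiXj}: the labels of $X_i$ are fixed, but whether the concatenated column realizes $X_j$ depends on the stones already present in $X_i$, so each pair $(i,j)$ has to be tested against both the column feasibility constraints and the horizontal adjacency forced by $X_i$.

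With $M$ in hand I would initialize $w(i)=\boldsymbol{w}_0(X_i)$, which is $0$ for the all-$\alpha_{02}$ column and $\infty$ otherwise, and then iterate the update $\boldsymbol{w}_k(X_j)=\min\{\boldsymbol{w}_{k-1}(X_i)\mid M(i,j)\ \text{is True}\}+|X_j|$ for $k=1,\dots,100$. At the end of each pass I would record $\min\{w(i)\mid X_i\in T^D(4)\}$; collecting these $100$ numbers produces the table, which one then checks case by case against the claimed values $2n+2,\,2n+1,\,\dots,\,2n-9$ over the prescribed ranges of $n$.

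The main obstacle is not mathematical but one of \emph{implementation fidelity}: the correctness of every entry rests on enumerating $T(4)$ and constructing $M$ without error, and the directional dependence of the labeling (a vertex labeled $\alpha_{11}$ in $X_j$ may be forced down to $\alpha_{10}$, or $\alpha_{01}$ raised to $\alpha_{02}$, by the neighboring column $X_i$) is precisely where a mistaken feasibility check would silently corrupt the whole sequence. A secondary point worth flagging is that the conclusion is asserted only for $n\le100$. Since the algorithm is a min-plus linear recurrence driven by the fixed transfer table $M$, the residual $\gamma_{\{2\}}(G_{4,n})-2n$ must eventually become periodic in $n$; however, certifying such periodicity and extracting a closed form valid for all $n$ goes beyond the present finite tabulation, which is why the general formula is left as the conjecture of Section~4.
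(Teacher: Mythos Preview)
Your proposal is correct and matches the paper's approach exactly: the paper presents this theorem as a direct application of the Section~3 algorithm with $m=4$ and simply ends with \qed, offering no argument beyond the implicit claim that the listed values are the algorithm's output. Your write-up is in fact more detailed than the paper's, which gives no discussion of implementation fidelity or the eventual periodicity remark.
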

\qed

For the domination number $\gamma(G_{4,n})=\gamma_{\{1\}}(G_{4,n}),$ existing result tells us
$$\gamma(G_{4,n})=\begin{cases}
n+1, \quad \text{if}~n\in\{1,2,3,5,6,9\};\\
n, \quad \text{otherwise}.
\end{cases}$$
Clearly, $\gamma_{\{2\}}(G_{4,n})\leq 2\gamma(G_{4,n})$. Hence, it is of interest to see $G_{4,15}$, the first case satisfying $\gamma_{\{2\}}(G_{4,n})<2n$. Also, we can track to an integer $\{2\}$- dominating function on $G_{4,15}$ of weight $29$ by the same programming as shown in Figure~\ref{fig_G415}. Recall that each vertex $v\in V_{m,n}$ is denoted by a circle filled with white, slashes, and black if the mapped value $f(v)=0,$ $1$, and $2,$ respectively.

\begin{figure}[h]
\begin{center}
\begin{tikzpicture}

\draw (0,0)--(14,0);
\draw (0,1)--(14,1);
\draw (0,2)--(14,2);
\draw (0,3)--(14,3);
\draw (0,0)--(0,3);
\draw (1,0)--(1,3);
\draw (2,0)--(2,3);
\draw (3,0)--(3,3);
\draw (4,0)--(4,3);
\draw (5,0)--(5,3);
\draw (6,0)--(6,3);
\draw (7,0)--(7,3);
\draw (8,0)--(8,3);
\draw (9,0)--(9,3);
\draw (10,0)--(10,3);
\draw (11,0)--(11,3);
\draw (12,0)--(12,3);
\draw (13,0)--(13,3);
\draw (14,0)--(14,3);

\draw[fill=white] (0,0)coordinate(A00) circle(0.1cm);
\draw[fill=white] (0,1)coordinate(A01) circle(0.1cm);
\draw[pattern=north east lines] (0,1)coordinate(A01) circle(0.1cm);
\draw[fill=white] (0,2)coordinate(A02) circle(0.1cm);
\draw[pattern=north east lines] (0,2)coordinate(A02) circle(0.1cm);
\draw[fill=white] (0,3)coordinate(A03) circle(0.1cm);
\draw[fill=white] (1,0)coordinate(A10) circle(0.1cm);
\draw[pattern=north east lines] (1,0)coordinate(A10) circle(0.1cm);
\draw[fill=white] (1,1)coordinate(A11) circle(0.1cm);
\draw[fill=white] (1,2)coordinate(A12) circle(0.1cm);
\draw[fill=white] (1,3)coordinate(A13) circle(0.1cm);
\draw[pattern=north east lines] (1,3)coordinate(A13) circle(0.1cm);
\draw[fill=white] (2,0)coordinate(A20) circle(0.1cm);
\draw[pattern=north east lines] (2,0)coordinate(A20) circle(0.1cm);
\draw[fill=white] (2,1)coordinate(A21) circle(0.1cm);
\draw[fill=white] (2,2)coordinate(A22) circle(0.1cm);
\draw[fill=white] (2,3)coordinate(A23) circle(0.1cm);
\draw[pattern=north east lines] (2,3)coordinate(A23) circle(0.1cm);
\draw[fill=white] (3,0)coordinate(A30) circle(0.1cm);
\draw[fill=white] (3,1)coordinate(A31) circle(0.1cm);
\draw[pattern=north east lines] (3,1)coordinate(A31) circle(0.1cm);
\draw[fill=white] (3,2)coordinate(A32) circle(0.1cm);
\draw[pattern=north east lines] (3,2)coordinate(A32) circle(0.1cm);
\draw[fill=white] (3,3)coordinate(A33) circle(0.1cm);
\draw[fill=white] (4,0)coordinate(A40) circle(0.1cm);
\draw[fill=white] (4,1)coordinate(A41) circle(0.1cm);
\draw[fill=white] (4,2)coordinate(A42) circle(0.1cm);
\draw[pattern=north east lines] (4,2)coordinate(A42) circle(0.1cm);
\draw[fill=white] (4,3)coordinate(A43) circle(0.1cm);
\draw[fill] (5,0)coordinate(A50) circle(0.1cm);
\draw[fill=white] (5,1)coordinate(A51) circle(0.1cm);
\draw[fill=white] (5,2)coordinate(A52) circle(0.1cm);
\draw[fill=white] (5,3)coordinate(A53) circle(0.1cm);
\draw[pattern=north east lines] (5,3)coordinate(A53) circle(0.1cm);
\draw[fill=white] (6,0)coordinate(A60) circle(0.1cm);
\draw[fill=white] (6,1)coordinate(A61) circle(0.1cm);
\draw[fill=white] (6,2)coordinate(A62) circle(0.1cm);
\draw[fill=white] (6,3)coordinate(A63) circle(0.1cm);
\draw[pattern=north east lines] (6,3)coordinate(A63) circle(0.1cm);
\draw[fill=white] (7,0)coordinate(A70) circle(0.1cm);
\draw[fill] (7,1)coordinate(A71) circle(0.1cm);
\draw[fill=white] (7,2)coordinate(A72) circle(0.1cm);
\draw[pattern=north east lines] (7,2)coordinate(A72) circle(0.1cm);
\draw[fill=white] (7,3)coordinate(A73) circle(0.1cm);
\draw[fill=white] (8,0)coordinate(A80) circle(0.1cm);
\draw[fill=white] (8,1)coordinate(A81) circle(0.1cm);
\draw[fill=white] (8,2)coordinate(A82) circle(0.1cm);
\draw[fill=white] (8,3)coordinate(A83) circle(0.1cm);
\draw[pattern=north east lines] (8,3)coordinate(A83) circle(0.1cm);
\draw[fill] (9,0)coordinate(A90) circle(0.1cm);
\draw[fill=white] (9,1)coordinate(A91) circle(0.1cm);
\draw[fill=white] (9,2)coordinate(A92) circle(0.1cm);
\draw[fill=white] (9,3)coordinate(A93) circle(0.1cm);
\draw[pattern=north east lines] (9,3)coordinate(A93) circle(0.1cm);
\draw[fill=white] (10,0)coordinate(A100) circle(0.1cm);
\draw[fill=white] (10,1)coordinate(A101) circle(0.1cm);
\draw[fill=white] (10,2)coordinate(A102) circle(0.1cm);
\draw[pattern=north east lines] (10,2)coordinate(A102) circle(0.1cm);
\draw[fill=white] (10,3)coordinate(A103) circle(0.1cm);
\draw[fill=white] (11,0)coordinate(A110) circle(0.1cm);
\draw[fill=white] (11,1)coordinate(A111) circle(0.1cm);
\draw[pattern=north east lines] (11,1)coordinate(A111) circle(0.1cm);
\draw[fill=white] (11,2)coordinate(A112) circle(0.1cm);
\draw[pattern=north east lines] (11,2)coordinate(A112) circle(0.1cm);
\draw[fill=white] (11,3)coordinate(A113) circle(0.1cm);
\draw[fill=white] (12,0)coordinate(A120) circle(0.1cm);
\draw[pattern=north east lines] (12,0)coordinate(A120) circle(0.1cm);
\draw[fill=white] (12,1)coordinate(A121) circle(0.1cm);
\draw[fill=white] (12,2)coordinate(A122) circle(0.1cm);
\draw[fill=white] (12,3)coordinate(A123) circle(0.1cm);
\draw[pattern=north east lines] (12,3)coordinate(A123) circle(0.1cm);
\draw[fill=white] (13,0)coordinate(A130) circle(0.1cm);
\draw[pattern=north east lines] (13,0)coordinate(A130) circle(0.1cm);
\draw[fill=white] (13,1)coordinate(A131) circle(0.1cm);
\draw[fill=white] (13,2)coordinate(A132) circle(0.1cm);
\draw[fill=white] (13,3)coordinate(A133) circle(0.1cm);
\draw[pattern=north east lines] (13,3)coordinate(A133) circle(0.1cm);
\draw[fill=white] (14,0)coordinate(A140) circle(0.1cm);
\draw[fill=white] (14,1)coordinate(A141) circle(0.1cm);
\draw[pattern=north east lines] (14,1)coordinate(A141) circle(0.1cm);
\draw[fill=white] (14,2)coordinate(A142) circle(0.1cm);
\draw[pattern=north east lines] (14,2)coordinate(A142) circle(0.1cm);
\draw[fill=white] (14,3)coordinate(A143) circle(0.1cm);

\end{tikzpicture}

\caption{A dominating function on $G_{4,15}$ of weight 29.} 
\label{fig_G415}

\end{center}
\end{figure}

The value of $\gamma_{\{2\}}(G_{4,n})$ seems to be regular as $n$ increases. Hence, we observe the result in Theorem~\ref{thm_Gridmn_4} and give a conjecture.
\begin{conj}  \label{conj_gamma2G4n}
If $n\geq 49$, then
$$\gamma_{\{2\}}(G_{4,{n}})=2n-\left\lfloor\frac{n-9}{10}\right\rfloor.$$
\end{conj}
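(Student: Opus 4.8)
The plan is to prove the conjecture in the customary two-sided fashion: an explicit periodic construction for the upper bound, and the eventual regularity of the transfer-matrix dynamic program of Section~3 for the matching lower bound, in the spirit of \cite{gprt11} and \cite{rt19}. Fix $m=4$ and regard the recurrence
$${\boldsymbol{w}}_{k}(X_j)=\min\{{\boldsymbol{w}}_{k-1}(X_i)\mid M(i,j)\text{ is True}\}+|X_j|$$
as describing minimum-weight directed walks in the weighted digraph $\mathcal{D}$ on vertex set $T(4)$ whose arcs are the True entries of $M$ and whose vertex $X_j$ carries the weight $|X_j|$ of Definition~\ref{defn_FnX}. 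Then ${\boldsymbol{w}}_{n}(X_j)$ is the least weight of such a walk of length $n$ ending at $X_j$, and the Lemma relating ${\boldsymbol{w}}_{n}$ to the domination number gives $\gamma_{\{2\}}(G_{4,n})=\min\{{\boldsymbol{w}}_{n}(X)\mid X\in T^D(4)\}$.

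For the upper bound I would isolate a column pattern of period $10$ that uses a total of $19$ stones over its ten columns and integer $\{2\}$-dominates every vertex it covers; the weight-$29$ function on $G_{4,15}$ in Figure~\ref{fig_G415} already displays the correct local gadget and, read columnwise, supplies a closed walk of length $10$ and weight $19$ in $\mathcal{D}$. Tiling the interior of $G_{4,n}$ with $\lfloor(n-9)/10\rfloor$ copies of this block---each saving exactly one stone against the trivial rate of two stones per column---and completing the two ends at that trivial rate, a direct verification that every closed neighborhood receives at least $2$ produces an integer $\{2\}$-dominating function of weight exactly $2n-\lfloor(n-9)/10\rfloor$. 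Hence $\gamma_{\{2\}}(G_{4,n})\le 2n-\lfloor(n-9)/10\rfloor$ for all $n\ge 49$.

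The exact value then hinges on the cyclicity theory of min-plus (tropical) matrix powers. Pushing each vertex weight $|X_j|$ onto the arcs entering $X_j$ turns the recurrence into ${\boldsymbol{w}}_{k}=A^{\otimes k}\otimes{\boldsymbol{w}}_{0}$ for a finite min-plus matrix $A$, whose powers are eventually periodic up to a linear drift: there are a cyclicity $p$, a transient $k_0$, and an eigenvalue $\lambda$---the minimum mean weight of a cycle in $\mathcal{D}$---such that ${\boldsymbol{w}}_{k+p}(X)={\boldsymbol{w}}_{k}(X)+\lambda p$ for all $X$ and all $k\ge k_0$. Minimizing over $T^D(4)$ yields $\gamma_{\{2\}}(G_{4,n+p})=\gamma_{\{2\}}(G_{4,n})+\lambda p$ for $n\ge k_0$, and a minimum-mean-cycle computation on $\mathcal{D}$ is expected to return $\lambda=\tfrac{19}{10}$ attained on a cycle of length $p=10$---precisely the gadget of the upper bound---so that
$$\gamma_{\{2\}}(G_{4,n+10})=\gamma_{\{2\}}(G_{4,n})+19\qquad(n\ge k_0).$$
If one can guarantee $k_0\le 91$, then the ten consecutive values for $91\le n\le 100$ already tabulated in Theorem~\ref{thm_Gridmn_4} form a full period; iterating the recurrence propagates the closed form to all $n\ge 91$, and together with the directly verified range $49\le n\le 90$ this establishes the formula for every $n\ge 49$.

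The main obstacle is exactly this transient bound $k_0\le 91$. The generic cyclicity estimates for a min-plus matrix of size $|T(4)|=389$ are hopelessly large, so merely invoking the theorem is not enough. I expect the resolution to demand either a refined structural analysis showing that, after a bounded number of columns, every minimal function is forced into the periodic gadget---an accounting/discharging argument of the type carried out in \cite{gprt11}---or a certified computation exhibiting a feasible potential (a dual certificate for the minimum-walk problem) that simultaneously pins down $\lambda=\tfrac{19}{10}$ and a small transient. Producing this uniform lower bound, rather than the construction or the mere asymptotic rate, is the genuinely hard step, and it is what leaves the statement a conjecture here.
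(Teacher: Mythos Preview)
The paper does not prove this statement at all: it is stated as a conjecture, supported only by the computational data of Theorem~\ref{thm_Gridmn_4} for $n\le 100$, with no argument offered beyond the observation that the values look regular. There is therefore no ``paper's own proof'' to compare against.

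Your proposal is a sensible outline of how one would attack the conjecture, and you are candid that it is not a proof. The upper-bound half is plausible in spirit, though you have not actually exhibited the period-$10$ block with weight $19$ nor verified that its tiling, together with the end caps, integer $\{2\}$-dominates $G_{4,n}$ with exactly the claimed weight; the $G_{4,15}$ picture is suggestive but does not by itself certify a closed walk of the required length and weight in $\mathcal{D}$. The lower-bound half correctly identifies the min-plus cyclicity framework and the crux: obtaining a usable bound on the transient $k_0$. As you say, the generic bounds for a $389\times 389$ tropical matrix are far too large, and neither a discharging argument nor a certified dual potential is supplied. So the genuine gap you name---controlling $k_0$---remains, and until it (or an alternative uniform lower bound) is filled, the argument does not establish the conjecture. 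In short, your write-up is an honest research plan rather than a proof, which is consistent with the paper's own status for this statement.
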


\section{Concluding remark}

For positive integers $m$ and $n$, the grid graph $G_{m,n}=P_m\square P_n$ is defined as the Cartesian product of path graphs $P_m$ of $m$ vertices and $P_n$ of $n$ vertices. An integer $\{2\}$-dominating function of $G_{m,n}$ is a function $f:V_{m,n}\rightarrow\{0,1,2\}$ satisfying $\sum_{u\in N[v]} f(u)\geq 2$ for all $v\in V_{m,n}$, where $V_{m,n}$ is the vertex set of $G_{m,n}$ and $N[v]$ is the set including $v$ and vertices adjacent to $v.$ The integer $\{2\}$-domination number $\gamma_{\{2\}}(G_{m,n})$ is the minimum value of $\sum_{v\in V_{m,n}}f(v)$ among all integer $\{2\}$-dominating functions of $G_{m,n}.$ We focus on calculating the value of $\gamma_{\{2\}}(G_{m,n})$ in this paper.

In Section~2, we propose results for $m\in\{1,2,3\}.$ In Theorem~\ref{thm_Gridmn_1}, we show
$$\gamma_{\{2\}}(G_{1,n})=2\cdot\left\lceil\frac{n}{3}\right\rceil.$$
In Theorem~\ref{thm_Gridmn_2}, we show
$$\gamma_{\{2\}}(G_{2,n})=n+1.$$
In Theorem~\ref{thm_Gridmn_3}, we show
$$\gamma_{\{2\}}(G_{3,n})\leq \left\lfloor\frac{3n}{2}\right\rfloor+1.$$
Furthermore, we suggest in Conjecture~\ref{conj_gamma2G3n} that the above inequality is actually an equality and give some comment.

In Section~3, we propose an algorithm to compute $\gamma_{\{2\}}(G_{m,n})$ for arbitrary positive integers $m$ and $n$. Moreover, if $m$ is fixed, then its time complexity will be linear in $n.$ In order to implement the algorithm, we define a labeling method which classifies the status of each vertex corresponding to any given function $f:V_{m,n}\rightarrow\{0,1,2\}.$ Additionally, we apply the algorithm on counting $\gamma_{\{2\}}(G_{4,n})$, and find that $\gamma_{\{2\}}(G_{4,n})$ has some regulation as $n\geq 49,$ that could not be easily achieved by hand calculation. We give Conjecture~\ref{conj_gamma2G4n} that
$$\gamma_{\{2\}}(G_{4,{n}})=2n-\left\lfloor\frac{n-9}{10}\right\rfloor,\quad\quad\quad\text{for}~n\geq 49,$$

As a future work, we believe that the algorithm would be generalized to find $\gamma_{\{k\}}(G_{m,n})$ for arbitrary positive integer $k$. The methods given in~\cite{gprt11} and~\cite{rt19} were probably key in this topic.

\section*{Acknowledgments}
This research is supported by the National Science and Technology Council of Taiwan R.O.C. under the project NSTC 113-2115-M-031-005.


\end{document}